\documentclass[11pt]{article} 

\usepackage[margin=1in]{geometry}
\usepackage{subcaption}

\usepackage{amsmath}
\usepackage{amssymb}
\usepackage{amsthm}
\usepackage{bm}

\usepackage{graphicx}
\usepackage{booktabs}
\usepackage{caption}
\usepackage{float}
\usepackage{rotating}
\usepackage{xcolor}
\usepackage{tikz}

\usepackage{enumitem}
\usepackage{microtype}
\usepackage{titlesec}
\usepackage{cite}
\usepackage{blindtext}
\usepackage{lipsum}

\emergencystretch=\maxdimen
\usepackage[
    colorlinks=true,
    linkcolor=blue!60!black,
    citecolor=blue!60!black,
    urlcolor=blue!60!black
]{hyperref}

\titleformat{\section}{\large\bfseries}{\thesection}{0.6em}{}
\titleformat{\subsection}{\normalsize\bfseries}{\thesubsection}{0.6em}{}

\newcommand{\R}{\mathbb{R}}
\newcommand{\Sph}{\mathbb{S}^{2}}
\newcommand{\dd}{\,\mathrm{d}}
\newcommand{\E}{\mathbb{E}}
\newcommand{\Fb}{\frac{F}{m}}
\newcommand{\ph}{\varphi}

\newcommand{\grad}{\nabla}
\newcommand{\vx}{\mathbf{x}}
\newcommand{\vv}{\mathbf{v}}
\newcommand{\vz}{\mathbf{z}}

\newcommand{\relL}{\mathrm{Rel}\,L^2}

\theoremstyle{plain}
\newtheorem{lemma}{Lemma}

\title{\textbf{Weak Adversarial Neural Pushforward Method \\ for Boltzmann Equation}}
\author{Jenia Fardousi Koly, Andrew Qing He, Wei Cai\thanks{Corresponding author, cai@smu.edu. 8/7/2026.} \\ \\ Department of Mathematics, Southern Methodist University, Dallas, Texas, USA}
\date{}

\begin{document}
\maketitle

\begin{abstract}
    In this paper, we extend a weak adversary neural network pushforward method for solving time dependent Boltzmann equation and a weak formulation of the collision operator is proposed where an invertible neural pushforward mapping is used to generating samples given by the distribution governed by the Boltzmann equation. The training of the pushforward mapping is learnt by enforcing the weak form of the Boltzmann equation. Numerical results have demonstrated the effectiveness of the proposed method.  
\end{abstract}

\section{Introduction}

The Boltzmann equation (BE) provides a dynamic statistical description, through a density distribution in the phase space for classical systems of many particles, where applying Newtonian mechanics to individual particles is not feasible due to the large number of particles involved and the lack of knowledge of their initial states. As the phase space is six dimensional in addition to the time variable, the BE poses a challenge for computational simulations.

For solving the BE numerically, the large dimensional space of independent variables, namely the spatial variables and the velocity variables, needs to be considered. This can be achieved by first discretizing the distribution function in the phase space and then, in the spatial variables, as part of the discretization of the spatial derivatives. The main difficulty in solving numerically the BE comes from the collision term, since a straightforward quadrature of the latter would require $O(N^6)$ computations per spatial cell, where N is the number of grid points in the phase space \cite{jaiswal2019discontinuous}.Consequently, several approaches, such as Fourier–Galerkin spectral methods \cite{pareschi2000numerical}, have been put forth to deal with the dimensional cost of the numerical solution of the BE. Here, we concentrate on the deterministic technique to solving the BE. Fourier-Galerkin spectral methods have been developed to achieve spectral accuracy  in the calculation of the collision operator, while at the same time reducing the dimensional cost of the numerical solution, and also to allow for a large flexibility in the choice of the collision kernel. Fast spectral algorithms for solving the BE have been recently developed, see for example \cite{mouhot2006fast,gamba2017fast}. In addition to the spectral reduction of the computational cost in the phase space, for achieving a high order of accuracy in the physical space, in \cite{jaiswal2019discontinuous}, a Discontinuous Galerkin (DG) method has been recently proposed to solve the full BE. Related efforts include asymptotic-preserving schemes that remain accurate across collision regimes \cite{jin1999efficient} a solution representation that dynamically evolves in a low-dimensional space to alleviate the effects of dimensionality
\cite{einkemmer2018low,hu2022adaptive,wang2021efficient}. A comprehensive account of these: a survey of the abovementioned deterministic approaches for the numerical solution of the BE has also been recently completed by the authors Dimarco and Pareschi in \cite{dimarco2014numerical}. On the stochastic side, however, many attempts have been made to "hybridize" deterministic methods for the solution of the BE with a particle representation of the distribution function, direct simulation Monte Carlo (DSMC) method \cite{bird1994molecular,nanbu1980direct}, is well known to asymptotically converge to the solution of the BE in the limit of a very large number of particles \cite{wagner1992convergence}, much larger than the number of spatial grid points, and thus not competitive with a fully deterministic strategy in terms of the average with statistical error of order $1/\sqrt{N}$.

A recent development is the introduction of deep learning techniques using neural networks to solve PDEs from mathematical physics, including the BE \cite{raissi2019physics,yu2018deep,sirignano2018dgm,zang2020weak}. The deep learning approach reformulates the PDE solution as an optimization problem based on an objective loss function, which is usually based on some mathematical or physical property of the PDE. One of the main advantages of using a neural network is its ability to approximate high-dimensional functions; therefore, it is quite natural to apply it to find the solution of the BE. Several attempts in this direction have been carried out \cite{han2019uniformly,schotthofer2022structure,xiao2021using}.

In this paper, we will extend our recently developed weak adversarial neural pushforward method (WANPM) \cite{he2025learning,he2026weakFP,he2026weakMFP} to study the BE.
The WANPM is an indirect way to find the solution of the BE by generating samples for the solution---viewed as a probability density function (PDF) in the phase space---through deep learning, where the loss function is based on the weak form of the BE. The key idea is that, by using the weak form of the BE through integration by parts, the weak form turns into an expectation of the differentiated test function under the sought after BE governed PDF. Then, a  neural pushforward mapping between a simple base distribution in the base space and the desired BE phase space can then be learnt to give the desired PDF by minimizing the weak-form residual for the BE. For the BE, a special treatment of the weak form of the collision operator is needed, obtained by introducing a marginal PDF in the physical space, which extends the previous WANPM to the BE. Since the weak collision contribution is expressed as an expectation over samples generated by the learned pushforward and is thus estimated by Monte Carlo averaging over colliding pairs and scattering directions, its cost scales with the number of samples rather than with a tensor-product grid in velocity space. This treatment of the collision term in the BE will be shown to be advantageous over other methods in terms of the cost of evaluating the high-dimensional collision operator—a major challenge for traditional numerical methods.

WANPM offers several unique advantages over other  alternatives, deterministic mesh or collocation solvers such as physical informed neural networks (PINNs) and stochastic particle methods (DSMC). First, because the solution is represented as a \emph{pushforward} of a simple base law, $(\vx,\vv)=G_\theta(\vz,t)$, the model density is nonnegative by construction; this structural property residual-form PINNs must enforce specifically with difficulties. Second, the pushforward mapping approach automatically generates samples in regions as demanded by the BE solution, achieving an automatic adaptive representation of the solution. Mesh based numerical methods or PINNS face challenges in adaptivity in the high dimensional phase space. Third, the pushforward learns the \emph{joint} phase-space law rather than a pointwise density field, so it captures correlations between position and velocity that a density-fitting PINN systematically misses: on a free-transport BE, the PINN reproduces every one-dimensional marginal, yet returns a position-velocity covariance of essentially zero (relative error $99.96\%$) against the exact value of one, whereas WANPM recovers it to about $1.2\%$ (Section~\ref{sec:results}). Fourth, the weak adversarial residual enforces the equation in distributional form using trainable test functions. This helps detect errors that ordinary collocation residuals may miss, such as an incorrect joint density even when the marginals look correct. Fifth, the collision term is computed by Monte Carlo sampling of particle pairs and scattering directions, so the cost scales with sample size rather than a high-dimensional velocity grid. On the other hand, in comparison with DSMC, WANPM provides a single differentiable, mesh-free sampler that works across free transport, forced transport, and collisional regimes.

The rest of the paper is organized as follows. Section~\ref{sec:bte-forced} introduces the forced BE, the collision operator, and the elastic-collision involution. Section~\ref{sec:weakform} derives the weak form under an external force and the density-consistent treatment of the collision term. Section~\ref{sec:wanpm-forced} presents the WANPM pushforward construction, the adversarial objective, and the Monte Carlo estimators. Sections~\ref{sec:numerical_results}  report the free-transport benchmark E1 against DSMC and PINN, and Section~\ref{sec:E2} reports the harmonic-force benchmark E2. Section~\ref{sec:conclusion} concludes. Detailed exact-solution derivations and the DSMC and error-metric protocols are collected in the appendix.

\section{The Boltzmann equation under external force}
\label{sec:bte-forced}
The density distribution in the phase space for a classic particle system is denoted by
\[
f:\R_x^3\times \R_v^3\times [0,T]\to \R_{+},
\qquad f=f(x,v,t)
\]
The dynamics of the distribution  in the phase space is governed by the following Boltzmann transport equation with an external force $F$ acting on particles of mass $m$ in the non-conservative form
\begin{equation}
\partial_t f+v\cdot\nabla_x f+\frac{F}{m}\cdot\nabla_v f=Q[f,f].
\label{eq:forced_bte_nc}
\end{equation}
A corresponding conservative form is given by
\begin{equation}
\partial_t f+v\cdot \nabla_x f+
\nabla_v\cdot\left(\frac{F}{m}f\right)=Q[f,f].
\label{eq:forced_bte_c}
\end{equation}
By the product rule,
\[
\nabla_v\cdot\left(\frac{F}{m}f\right)
=
\frac{F}{m}\cdot\nabla_v f
+
f\,\nabla_v\cdot\left(\frac{F}{m}\right).
\]
Therefore, the two forms coincide when
\begin{equation}
\nabla_v\cdot\left(\frac{F}{m}\right)=0.
\label{eq:force_divergence_free}
\end{equation}
This holds, for example, when $F=F(x,t)$ is independent of $v$. It also holds for velocity-divergence-free forces such as the Lorentz force $F=q(E+v\times B_{\rm mag})$.

In \eqref{eq:forced_bte_nc}, the Boltzmann collision operator is given by
\begin{equation}
\begin{aligned}
Q[f,f](x,v,t)
&=
\int_{\R^3}\int_{\Sph}
B(v-v_*,\omega)
\Big[f(x,v',t)f(x,v_*',t)-f(x,v,t)f(x,v_*,t)\Big]
\dd\omega\dd v_* ,
\end{aligned}
\label{eq:collision_operator}
\end{equation}
where
\begin{equation}
v'=\frac{v+v_*}{2}+\frac{|v-v_*|}{2}\omega,
\qquad
v_*'=\frac{v+v_*}{2}-\frac{|v-v_*|}{2}\omega,
\qquad \omega\in\Sph.
\label{eq:post_collision_velocities}
\end{equation}

Here $v_*$ is the velocity of the second colliding particle, $\omega$ is the scattering direction, and $B(v-v_*,\omega)\ge 0$ is the collision kernel.

The kernel may be kept general. Under the isotropic assumption, it has the form
\begin{equation}
B(v-v_*,\omega)=B(|v-v_*|,\cos\theta),
\qquad
\cos\theta=\omega\cdot\frac{v-v_*}{|v-v_*|}.
\label{eq:isotropic_kernel}
\end{equation}

\begin{center}
\begin{tikzpicture}[scale=1.1, line width=1.2pt, >=stealth]

\fill (0,0) circle (2pt);

\draw[->] (-2,1.2) -- (0,0);
\draw[->] (-2,-1.5) -- (0,0);

\draw[->] (0,0) -- (1.8,1.7);
\draw[->] (0,0) -- (2,-0.8);

\draw[->, dashed] (0,0) -- (1.1,0.45);
\node at (1.28,0.62) {$\omega$};

\node at (-2.25,1.35) {$v_*$};
\node at (-2.25,-1.7) {$v$};
\node at (2.05,1.85) {$v_*'$};
\node at (2.25,-0.9) {$v'$};

\end{tikzpicture}
\end{center}

\begin{lemma}
The map
$(v,v_*,\omega)\mapsto (v',v_*',\omega')$ on
$\big((\R^3\times\R^3)\setminus\{(v,v_*):v=v_*\}\big)\times\Sph$ defined by
\begin{equation}
v'=\frac{v+v_*}{2}+\frac{|v-v_*|}{2}\omega,
\qquad
v_*'=\frac{v+v_*}{2}-\frac{|v-v_*|}{2}\omega,
\qquad
\omega'=\frac{v-v_*}{|v-v_*|}
\label{eq:collision_involution}
\end{equation}
is an involution. Its Jacobian determinant has absolute value one, so $\dd\omega'\dd v_*'\dd v'=\dd\omega\dd v_*\dd v$.
\end{lemma}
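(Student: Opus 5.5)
The plan is to pass to center-of-mass and relative coordinates, where the map becomes transparent. Write
\[
c=\frac{v+v_*}{2},\qquad u=v-v_*=r\sigma,\quad r=|v-v_*|>0,\ \sigma=\frac{v-v_*}{|v-v_*|}\in\Sph .
\]
From \eqref{eq:collision_involution} one reads off directly that $v'+v_*'=v+v_*$ and $v'-v_*'=|v-v_*|\,\omega$. Hence the image has center of mass $c'=c$, relative speed $r'=|v'-v_*'|=r$, relative direction $\sigma'=\omega$, and new scattering direction $\omega'=\sigma$.

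So in the coordinates $(c,r,\sigma,\omega)$ the map is simply
\[
(c,r,\sigma,\omega)\mapsto(c,r,\omega,\sigma),
\]
that is, the swap of the two sphere variables. The key steps are as follows.

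\textbf{Step 1 (well-definedness).} I would first check that the domain is preserved. Since $r'=r>0$, we have $v'\neq v_*'$, so the map sends $\big((\R^3\times\R^3)\setminus\{v=v_*\}\big)\times\Sph$ into itself.

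\textbf{Step 2 (involution).} Next I would apply the map twice. Using $c'=c$, $r'=r$ and $\omega'=\sigma$,
\[
v''=c'+\tfrac{r'}{2}\omega'=c+\tfrac{r}{2}\sigma=c+\tfrac12(v-v_*)=v .
\]
In the same way $v_*''=v_*$. Finally $\omega''=\sigma'=\omega$, so the map composed with itself is the identity. Equivalently, the swap of $\sigma$ and $\omega$ is its own inverse.

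\textbf{Step 3 (measure preservation).} I would factor the volume element through the coordinates above. The linear change $(v,v_*)\mapsto(c,u)$ acts componentwise by the matrix $\begin{pmatrix}1/2&1/2\\1&-1\end{pmatrix}$, whose determinant is $-1$. Its $6\times6$ determinant is therefore $(-1)^3$, of absolute value one, which gives $\dd v\dd v_*=\dd c\dd u$. Spherical coordinates then give $\dd u=r^2\dd r\dd\sigma$, so
\[
\dd\omega\dd v_*\dd v=r^2\dd c\dd r\dd\sigma\dd\omega .
\]
The density $r^2$ is unchanged because $r'=r$, and $c'=c$. Swapping $\sigma$ and $\omega$ preserves the product surface measure $\dd\sigma\dd\omega$ on $\Sph\times\Sph$. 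Hence $\dd\omega'\dd v_*'\dd v'=\dd\omega\dd v_*\dd v$, which is exactly the statement that the Jacobian has absolute value one, with respect to Lebesgue measure on $\R^6$ times surface measure on $\Sph$.

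The main obstacle is Step 3, specifically making sense of the Jacobian determinant when one factor of the space is the sphere $\Sph$. A direct $9\times9$ computation in ambient coordinates is awkward. The coordinate reduction above sidesteps this: the map becomes a coordinate permutation composed with measure-preserving changes of variables.

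Note that the involution property by itself does not suffice. From $T\circ T=\mathrm{id}$ one only gets $J(T(p))\,J(p)=1$, which does not force $|J|=1$ pointwise. This is why I would rely on the explicit factorization rather than on the involution property.
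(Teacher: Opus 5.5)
Your proof is correct. For the involution, your argument matches the paper's. The paper also reads off $v'+v_*'=v+v_*$, $|v'-v_*'|=|v-v_*|$ and $v-v_*=|v'-v_*'|\,\omega'$, and then checks that applying the map again returns $(v,v_*,\omega)$. You repackage these identities as the swap $(c,r,\sigma,\omega)\mapsto(c,r,\omega,\sigma)$, and you explicitly check that the domain $v\neq v_*$ is preserved, which the paper leaves implicit.

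The real difference is the Jacobian claim. The paper gives no argument there; it only asserts that ``the standard elastic-collision change of variables is measure preserving.'' You actually prove it. You factor the map as $\Phi^{-1}\circ S\circ\Phi$, where $\Phi:(v,v_*,\omega)\mapsto(c,r,\sigma,\omega)$ carries $\dd v\dd v_*\dd\omega$ to $r^2\dd c\dd r\dd\sigma\dd\omega$, and $S$ is the sphere swap, which visibly preserves this measure.

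This gains two things over the paper's version. First, it makes precise what ``Jacobian determinant'' means on $\R^6\times\Sph$: the density with respect to Lebesgue measure times surface measure. Second, it closes the gap you correctly identify: $T\circ T=\mathrm{id}$ alone only gives $J(T(p))\,J(p)=1$. So the paper's involution computation does not by itself imply $\dd\omega'\dd v_*'\dd v'=\dd\omega\dd v_*\dd v$, and your coordinate factorization supplies the missing step.
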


\begin{proof}
From the first two relations in \eqref{eq:collision_involution},
\[
\frac{v'+v_*'}{2}=\frac{v+v_*}{2},
\qquad
\frac{v'-v_*'}{2}=\frac{|v-v_*|}{2}\omega.
\]
Hence $|v'-v_*'|=|v-v_*|$. Moreover,
\[
v-v_*=|v-v_*|\omega'=|v'-v_*'|\omega'.
\]
Therefore,
\[
v=\frac{v'+v_*'}{2}+\frac{|v'-v_*'|}{2}\omega',
\qquad
v_*=\frac{v'+v_*'}{2}-\frac{|v'-v_*'|}{2}\omega',
\]
and
\[
\omega=\frac{v'-v_*'}{|v'-v_*'|}.
\]
Thus applying the same transformation again recovers the original variables. Hence the map is an involution. The standard elastic-collision change of variables is measure preserving, giving the stated Jacobian identity.
\end{proof}

\paragraph{Normalization.}
In this paper, we treat $f$ as a probability density:
\begin{equation}
\int_{\R^3}\int_{\R^3} f(x,v,t)\dd v\dd x=1,
\qquad 0\le t\le T.
\label{eq:normalization}
\end{equation}
If $f_{\rm phys}=  M p$ is a physical number density with total particle number $  M$, where $p$ is normalized, then
\[
Q[f_{\rm phys},f_{\rm phys}]=  M^2 Q[p,p].
\]
After dividing the physical equation by $\mathcal M$, the normalized equation for $p$ carries a factor $\mathcal M$ on the collision term:
\[
\partial_t p+v\cdot\nabla_x p+\frac{F}{m}\cdot\nabla_v p
=
  M Q[p,p].
\]
Since the collision operator is linear in the kernel $B$, this constant may be absorbed into an effective collision kernel. For simplicity, we continue to denote the effective kernel by $B$.

\section{Weak form of the Boltzmann Equation under external force}
\label{sec:weakform}
Let $\ph=\ph(x,v,t)$ be a smooth admissible test function. For the non-conservative form \eqref{eq:forced_bte_nc}, multiply by $\ph$ and integrate over space, velocity, and time:
\begin{equation}
\int_0^T\int_{\R^6}
\ph\left(\partial_t f+v\cdot\nabla_x f+\frac{F}{m}\cdot\nabla_v f\right)
\dd v\dd x\dd t
=
\int_0^T
\underbrace{\int_{\R^6}\ph Q[f,f]\dd v\dd x}_{=:I(t)}
\dd t.
\label{eq:weak_multiply}
\end{equation}
Integrating by parts in $t$, $x$, and $v$, and assuming that all boundary terms vanish, gives
\begin{equation}
\E_{f(T)}[\ph(\cdot,\cdot,T)]
-
\E_{f(0)}[\ph(\cdot,\cdot,0)]
-
\int_0^T \E_{f(t)}[L^*_{\rm nc}\ph]\dd t
=
\int_0^T I(t)\dd t,
\label{eq:weak_identity_nc}
\end{equation}
where the adjoint operator for the non-conservative form is
\begin{equation}
L^*_{\rm nc}\ph
=
\partial_t\ph+v\cdot\nabla_x\ph+\frac{F}{m}\cdot\nabla_v\ph
+\ph\,\nabla_v\cdot\left(\frac{F}{m}\right).
\label{eq:adjoint_nc}
\end{equation}
For the conservative flux form \eqref{eq:forced_bte_c}, the adjoint is
\begin{equation}
L^*_{\rm c}\ph
=
\partial_t\ph+v\cdot\nabla_x\ph+\frac{F}{m}\cdot\nabla_v\ph.
\label{eq:adjoint_c}
\end{equation}
When \eqref{eq:force_divergence_free} holds, the two adjoints coincide. In all cases, the force contribution appears as a single-density expectation involving
\[
\E_f\left[\frac{F}{m}\cdot\nabla_v\ph\right].
\]
It does not introduce the quadratic density structure that appears in the collision term.

\subsection{Integration in the single-point velocity space}

Fix $x$ and $t$, and let $g(v|x,t)$ denote the conditional velocity density satisfying
\[
\int_{\R^3} g(v|x,t)\dd v=1.
\]
For simplicity, write $g(v|x,t)=g(v)$ and $\ph(x,v,t)=\ph(v)$ in this subsection. The weak collision contribution in velocity space is
\begin{equation}
I_v=\int_{\R^3} Q[g,g](v)\ph(v)\dd v.
\label{eq:Iv_def}
\end{equation}
Splitting the collision operator into gain and loss terms gives
\begin{align}
I_v
&=
\int_{\R^6\times\Sph}
B(v-v_*,\omega)g(v')g(v_*')\ph(v)
\dd\omega\dd v_*\dd v
\notag\\
&\quad -
\int_{\R^6\times\Sph}
B(v-v_*,\omega)g(v)g(v_*)\ph(v)
\dd\omega\dd v_*\dd v.
\label{eq:Iv_gain_loss}
\end{align}
Applying the involution to the gain term, the old relative velocity becomes $|v-v_*|\omega$, and the old scattering direction becomes $(v-v_*)/|v-v_*|$. After relabelling variables, $\ph(v)$ becomes $\ph(v')$. Hence, for a general kernel,
\begin{align}
I_v
&=
\int_{\R^6\times\Sph}
g(v)g(v_*)
\Bigg[
B\left(|v-v_*|\omega,\frac{v-v_*}{|v-v_*|}\right)\ph(v')
-
B(v-v_*,\omega)\ph(v)
\Bigg]
\dd\omega\dd v_*\dd v.
\label{eq:Iv_general_kernel}
\end{align}
Under the isotropic assumption \eqref{eq:isotropic_kernel}, the first kernel factor equals $B(v-v_*,\omega)$. Thus \eqref{eq:Iv_general_kernel} reduces to
\begin{equation}
I_v
=
\int_{\R^6\times\Sph}
B(v-v_*,\omega)g(v)g(v_*)
\big[\ph(v')-\ph(v)\big]
\dd\omega\dd v_*\dd v.
\label{eq:Iv_isotropic}
\end{equation}
Since $|\Sph|=4\pi$,
\[
\int_{\Sph}(\cdot)\dd\omega
=
4\pi\,\E_{\omega\sim U(\Sph)}[(\cdot)].
\]
The factor $4\pi$ may be kept explicitly or absorbed into the collision kernel.

\subsection{Splitting $f$ with respect to $(x,v)$}

Write the joint density as
\begin{equation}
f(x,v,t)=f_x(x,t)g(v|x,t),
\label{eq:fx_split}
\end{equation}
where
\[
f_x(x,t)=\int_{\R^3}f(x,v,t)\dd v,
\qquad
\int_{\R^3}g(v|x,t)\dd v=1.
\]
Then
\[
f(x,v,t)f(x,v_*,t)=f_x^2(x,t)g(v|x,t)g(v_*|x,t).
\]
Therefore, the collision contribution can be written as
\begin{align}
I(t)
&=
\E_{x\sim f_x(\cdot,t)}
\Bigg[
f_x(x,t)
\E_{\substack{v,v_*\sim g(\cdot|x,t)\\ \omega\sim U(\Sph)}}
\Bigg[
B\left(|v-v_*|\omega,\frac{v-v_*}{|v-v_*|}\right)\ph(x,v',t)
\notag\\
&\hspace{4.2cm}
-
B(v-v_*,\omega)\ph(x,v,t)
\Bigg]
\Bigg].
\label{eq:I_split_general}
\end{align}
Under the isotropic assumption, this becomes
\begin{equation}
I(t)
=
\E_{x\sim f_x(\cdot,t)}
\left[
f_x(x,t)
\E_{\substack{v,v_*\sim g(\cdot|x,t)\\ \omega\sim U(\Sph)}}
\left[
B(v-v_*,\omega)\big(\ph(x,v',t)-\ph(x,v,t)\big)
\right]
\right].
\label{eq:I_split_iso}
\end{equation}
The leading spatial factor in the collision term is $f_x^2$, not simply $f_x$. Therefore, if we sample $x\sim f_x(\cdot,t)$, one additional factor $f_x(x,t)$ remains as an explicit density weight. This is why the spatial sampler must be density-estimable. The pointwise value of $g(v|x,t)$ is not required; only samples $v\sim g(\cdot|x,t)$ are needed.

For a candidate density $f_\theta$ with known initial datum $f_0$, define the complete weak residual
\begin{equation}
R_\theta[\ph]
=
\E_{f_\theta(T)}[\ph(\cdot,\cdot,T)]
-
\E_{f_0}[\ph(\cdot,\cdot,0)]
-
\int_0^T \E_{f_\theta(t)}[L^*\ph]\dd t
-
\int_0^T I_\theta(t)\dd t.
\label{eq:complete_residual}
\end{equation}
Here $L^*$ is either $L^*_{\rm nc}$ or $L^*_{\rm c}$, depending on the force formulation. The exact solution satisfies $R_\theta[\ph]=0$ for all admissible test functions.

\section{WANPM for the Boltzmann equation under external force}
\label{sec:wanpm-forced}
\subsection{Admissible test functions}
Because the force contributes the term $\frac{F}{m}\cdot\nabla_v\ph$, the admissible test space must contain functions $\ph(x,v,t)$ that are differentiable in $v$, as well as in $x$ and $t$. In practice, the adversarial test-function network should include genuine $v$ dependence. Otherwise, if $\ph=\ph(x,t)$, then $\nabla_v\ph=0$ and the force term is not tested. Velocity-independent test functions remain admissible as a subset, but they do not probe the force term; they are still useful for conservation diagnostics.

\subsection{WANPM pushforward construction}

The density split \eqref{eq:fx_split} motivates a two-part pushforward model. First, use a density-estimable spatial sampler
\[
x=F_{\theta_x}(z_x,t),
\qquad z_x\sim \pi_{{\rm base},x}.
\]
Its density is obtained by change of variables:
\begin{equation}
f_{\theta_x}(x,t)
=
\pi_{{\rm base},x}\big(F_{\theta_x}^{-1}(x,t)\big)
\left|\det\frac{\partial F_{\theta_x}^{-1}}{\partial x}\right|.
\label{eq:flow_density}
\end{equation}
Second, use a conditional velocity sampler
\[
v=F_{\theta_v}(z_v;x,t),
\qquad z_v\sim \pi_{{\rm base},v}.
\]
Only samples from this conditional velocity model are required; its pointwise density is not used.

Thus a phase-space sample is generated by
\[
z_x\sim\pi_{{\rm base},x},\qquad x\leftarrow F_{\theta_x}(z_x,t),
\]
and then
\[
z_v\sim\pi_{{\rm base},v},\qquad v\leftarrow F_{\theta_v}(z_v;x,t).
\]
Because both components are pushforwards of a base density through the sampler maps, the model density is nonnegative by construction: $f_{\theta}\ge 0$ pointwise, with no penalty or projection required. This is a structural advantage over residual form collocation methods, in which positivity of $f_{\theta}$ must be forced separately.

\subsection{Objective and Monte Carlo estimator}

In the implemented method the adversarial test space is a bank of $K$ trainable plane waves
\begin{equation}
\ph_k(y,t)=\sin\!\big(w_k\cdot y+\kappa_k\,t+\beta_k\big),\qquad
\eta=\{w_k,\kappa_k,\beta_k\}_{k=1}^{K},\qquad y=(x,v),
\label{eq:planewave}
\end{equation}
whose adjoint action is analytic,
$L^\ast\ph_k=(\kappa_k+b(y)\cdot w_k)\cos(w_k\cdot y+\kappa_k t+\beta_k)$ with $b(y)=(v,-\tfrac{F}{m})$.
Writing $R_k=R_\theta[\ph_k]$ for the weak residual of the $k$-th test function, the pure objective is the
squared weak residual averaged over the bank,
\begin{equation}
\boxed{\;\min_\theta\ \max_\eta\ \ \mathcal L_{\rm pure}(\theta,\eta)=\frac1K\sum_{k=1}^{K}R_k^2\;}
\label{eq:minmax}
\end{equation}
with no additional normalization or penalty. The plane-wave frequencies are  trained adversarially to maximise the residual, while the pushforward parameters $\theta$ minimise it. A Sobolev-type
normalization $\|\ph_\eta\|_Y^2=\E[\ph_\eta^2+|\partial_t\ph_\eta|^2+|\nabla_x\ph_\eta|^2+|\nabla_v\ph_\eta|^2]$
may be used to divide the residual when the test network is unconstrained; for the bounded plane-wave bank it is not required since  $|\ph_k|\le1$ prevents the adversary from inflating the objective by rescaling.

Each expectation in $R_\theta$ is estimated by sample averages. For the transport term, sample $t_i\sim U[0,T]$ and then $(x_i,v_i)\sim f_\theta(\cdot,\cdot,t_i)$. Then
\begin{equation}
\int_0^T \E_{f_\theta(t)}[A(t)]\dd t
\approx
\frac{T}{N}\sum_{i=1}^N A(x_i,v_i,t_i).
\label{eq:mc_time}
\end{equation}
For the collision term, sample $x_i\sim f_{\theta_x}(\cdot,t_i)$, evaluate $f_{\theta_x}(x_i,t_i)$, sample $v_i,v_{*,i}\sim g_{\theta_v}(\cdot|x_i,t_i)$, and sample $\omega_i\sim U(\Sph)$. For an isotropic kernel,
\begin{equation}
\int_0^T I_\theta(t)\dd t
\approx
\frac{T}{N}\sum_{i=1}^N
f_{\theta_x}(x_i,t_i)
B(v_i-v_{*,i},\omega_i)
\left[\ph_\eta(x_i,v_i',t_i)-\ph_\eta(x_i,v_i,t_i)\right],
\label{eq:mc_collision}
\end{equation}
where
\[
v_i'=\frac{v_i+v_{*,i}}{2}+\frac{|v_i-v_{*,i}|}{2}\omega_i.
\]
If the factor $4\pi$ from the surface measure of $\Sph$ is not absorbed into $B$, then the right-hand side of \eqref{eq:mc_collision} should be multiplied by $4\pi$.

\textbf{Initial Condition:}
If the sampler does not enforce $f_\theta(\cdot,\cdot,0)=f_0$ by construction, the initial condition should be imposed through a distributional discrepancy, for example maximum mean discrepancy, Wasserstein distance, or an adversarial initial-condition loss
\[
\mathcal L_{\rm IC}
=
\sup_{\psi\in\Psi}
\left|\E_{f_\theta(\cdot,\cdot,0)}[\psi]-\E_{f_0}[\psi]\right|^2.
\]
Because the conditional velocity component is sample-only, the full joint pointwise density $f_\theta(x,v,t)$ is generally unavailable. Therefore, the default initial-condition loss should be distributional rather than pointwise.

\subsection{Conservative quantities}

For elastic Boltzmann collisions with the standard symmetry assumptions on the collision kernel, the collision invariants are
\[
\ph(v)\in\{1,\ v_i\ (i=1,2,3),\ |v|^2\}.
\]
For these choices, the collision contribution vanishes. In the force-free case, the weak form expresses conservation of mass, momentum, and kinetic energy. With an external force, the same choices give transport-force balance laws. Mass remains conserved for the conservative force formulation, or when $\nabla_v\cdot(F/m)=0$, while momentum and energy may change through the force term.
\subsection{Polynomial-augmented objective}
\label{sec:poly_obj}
The plane-wave bank senses the covariance of the pushforward only through the scalar $w^\top\Sigma w$, whose off-diagonal contributions carry random signs and partially cancel; low-order phase-space moments are therefore weakly constrained by the sine bank alone. To pin them directly, we optionally append a fixed bank of second-order test functions $\varphi\in\ x_i^2,\,v_i^2,\,x_iv_i\}$ whose weak residuals are enforced  cumulatively at every quadrature node,
\begin{equation}
R^{\varphi}_m=\E_{f(t_m)}[\varphi]-\E_{f_0}[\varphi]-\int_0^{t_m}\E[L^\ast\varphi]\,\dd t-\int_0^{t_m}I^{\rm coll}_\varphi\,\dd t .
\label{eq:poly_res}
\end{equation}
Because $L^\ast(x_iv_i)=v_i^2-\tfrac{F_i}{m}x_i$ reads the position--velocity covariance directly, this bank supplies
the signal the sine bank cancels. With $n_s$ sine and $n_p$ polynomial test functions the augmented objective is
\begin{equation}
\boxed{\;\min_\theta\ \max_\eta\ \
\mathcal L_{\rm poly}=\underbrace{\frac1{n_s}\sum_{k=1}^{n_s}R_k^2}_{\text{adversarial sine}}
+\lambda_{\rm poly}\underbrace{\frac1{n_p(Q-1)}\sum_{m=1}^{Q-1}\sum_{\varphi}\big(R^{\varphi}_m\big)^2}_{\text{fixed polynomial anchor}}\;}
\label{eq:minmax_poly}
\end{equation}
with weight $\lambda_{\rm poly}\ge0$; setting $\lambda_{\rm poly}=0$ recovers the pure objective \eqref{eq:minmax}
exactly. The polynomial bank carries no trainable parameters, so it adds constraints without enlarging the adversary.

\subsection{WANPM architecture}
\label{sec:wanpm_arch}
The pushforward is a time-conditioned invertible map built from the density split \eqref{eq:fx_split}: a
density-estimable spatial flow $x=F_{\theta_x}(t,x_0)$ that returns its log-Jacobian, and a sample-only conditional
velocity flow $v=F_{\theta_v}(t,x,v_0)$ conditioned on the pushed position. Each is a stack of affine coupling
layers whose scale and shift are produced by an MLP acting on the frozen (conditioning) block, and every layer is
multiplied by a temporal gate $g(t)$ with $g(0)=0$, so that
\begin{equation}
F_\theta(0,\cdot)=\mathrm{id}\quad\Longrightarrow\quad \rho_0=f_0\ \text{ exactly,}
\label{eq:gate_id}
\end{equation}
and the initial condition holds by construction with no IC penalty. A single coupling layer maps, per transformed
coordinate $j$,
\begin{equation}
y^{(\ell)}_j=y^{(\ell-1)}_j\exp\!\big(g(t)\,s_{\max}\tanh s^{(\ell)}_j\big)+g(t)\,u^{(\ell)}_j,\qquad
(s^{(\ell)},u^{(\ell)})=\mathrm{MLP}_\ell\big([\,y^{(\ell-1)}_{\rm cond},\,t\,]\big),
\label{eq:coupling}
\end{equation}
with the conditioning and transformed blocks swapped on alternate layers and the final MLP layer zero-initialised so that $F_\theta=\mathrm{id}$ at the start of training. Only the spatial flow returns a log-Jacobian, because the collision weight in \eqref{eq:mc_collision} requires the value of $f_{\theta_x}$; the velocity flow is sample-only. The optimisation is a two-player min--max: the pushforward parameters $\theta$ are trained by Adam (minimiser) and the plane-wave parameters $\eta$ by SGD (maximiser) at a larger learning rate, with $n_{\rm critic}$ ascent steps per
generator step and gradient clipping on both networks. Table~\ref{tab:wanpm_hyper} lists the components.

\begin{table}[H]
\centering
\small
\begin{tabular}{@{}ll@{}}
\toprule
component & choice\\
\midrule
spatial flow $F_{\theta_x}$ & affine-coupling RealNVP, density-estimable (returns $\log|\det|$)\\
velocity flow $F_{\theta_v}$ & affine-coupling RealNVP, conditioned on $x$, sample-only\\
coupling layers & $6$ per flow, alternating split, MLP $\to 64\to 64\to$ scale/shift, $\tanh$\\
temporal gate & $g(t)=\sqrt t$, $g(0)=0$ $\Rightarrow F_\theta(0,\cdot)=\mathrm{id}$\\
log-scale bound & $s_{\max}$ via $\tanh$\\
test bank $\ph_\eta$ & $K$ plane waves $\sin(w_k\!\cdot\! y+\kappa_k t+\beta_k)$, adversarial\\
polynomial anchor & fixed $\{x_i^2,v_i^2,x_iv_i\}$, optional, weight $\lambda_{\rm poly}$\\
generator optimiser & Adam (minimiser)\\
adversary optimiser & SGD (maximiser), larger lr, $n_{\rm critic}$ ascent steps, grad clip\\
time quadrature & clustered nodes $t_j=T(j/(Q-1))^2$ (matched to $\sqrt t$ gate)\\
\bottomrule
\end{tabular}
\caption{WANPM components for the forced Boltzmann equation. The generator is a split pushforward (density-estimable position flow, conditional velocity sampler); the adversary is a trainable plane-wave bank; the optional polynomial
anchor pins the low-order moments.}
\label{tab:wanpm_hyper}
\end{table}

\begin{figure}[htbp]
    \centering
    \includegraphics[width=\textwidth]{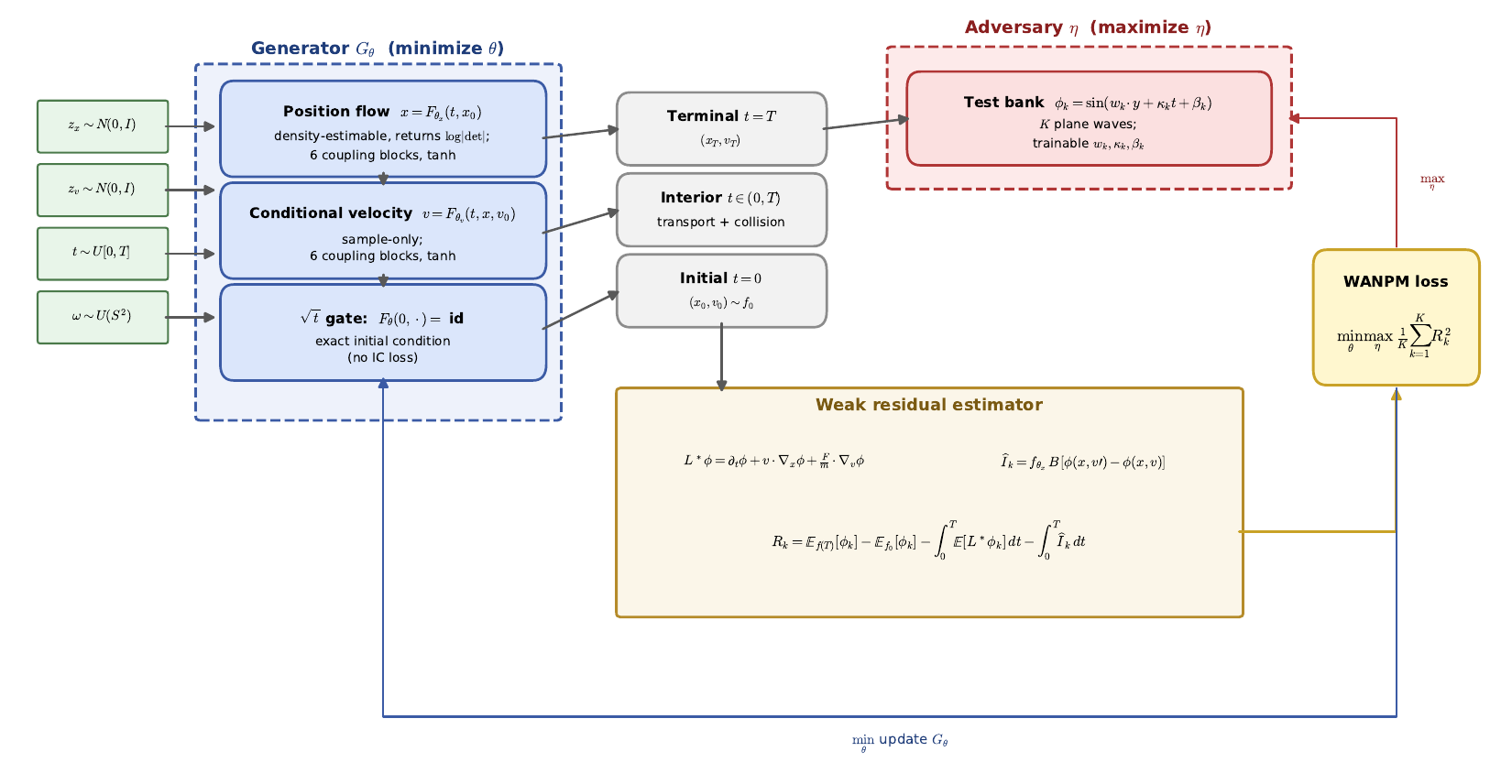}
    \caption{
   WANPM architecture for the forced Boltzmann equation. The generator $G_\theta$ yields phase-space samples using a
 density-estimable position flow $F_{\theta_x}$ and a conditional velocity sampler $F_{\theta_v}$ of Gaussian latent variables, both $\sqrt t$-gated so that $F_\theta(0,\cdot)=\mathrm{id}$. The weak residual estimator assembles terminal, initial, transport/force, and collision terms; the collision term uses the spatial log-Jacobian to recover $f_{\theta_x}$. The adversarial test network $\ph_\eta$ (a plane-wave bank, optionally augmented by a fixed polynomial moment bank) maximises the weak residual while the generator minimises the pure weak-form loss $\mathcal L_{\mathrm{pure}}=\frac1K\sum_k R_k^2$. The initial condition holds by construction through the $\sqrt t$ gate, so no initial-condition or regularization term enters the loss.
    }
    \label{fig:wanpm_architecture}
\end{figure}

\section{Numerical results}
\label{sec:numerical_results}
\subsection{Collisionless free transport}

we consider the collisionless and force-free case. Therefore,
\begin{equation*}
F=0,
\qquad
Q[f,f]=0.
\end{equation*}

Substituting these assumptions into the general BE gives the free-transport equation
\begin{equation}
\boxed{
\partial_t f+v\cdot\nabla_x f=0.
}
\label{eq:free_transport}
\end{equation}

The initial condition is prescribed at $t=0$, hence, the full initial-value problem is
\begin{equation}
\boxed{
\partial_t f+v\cdot\nabla_x f=0,
\qquad
f(x,v,0)=f_0(x,v).
}
\label{eq:ivp_free_transport}
\end{equation}
In the free-transport problem, both the physical space and velocity space are \textbf{three-dimensional}. We write
\begin{equation*}
x=(x_1,x_2,x_3)\in \mathbb R^3,
\qquad
v=(v_1,v_2,v_3)\in \mathbb R^3,
\qquad
t\geq 0.
\end{equation*}

we choose the \textit{initial condition} as a Gaussian distribution in both position and velocity:
\begin{equation}
\boxed{
f_0(x,v)
=
\mathcal N(x;0,\sigma_x^2 I_3)
\mathcal N(v;0,\sigma_v^2 I_3).
}
\label{eq:gaussian_initial_condition}
\end{equation}

Here, $\sigma_x>0$ controls the spread in physical space, and $\sigma_v>0$ controls the spread in velocity space.

\subsubsection{DSMC numerical result}
\label{subsec:dsmc_result}

The DSMC result for Experiment E1 is summarized as follows:
\begin{equation*}
\begin{array}{ll}
\text{Wall-clock time} & 0.19\ \mathrm{s},\\
\text{DSMC time-step iterations} & 200,\\
\mathrm{MSE}_{x_1} & 3.494\times 10^{-6},\\
\mathrm{MAE}_{x_1} & 1.266\times 10^{-3},\\
\mathrm{Rel}\,L^2_{x_1} & 1.332\times 10^{-2},\\
\mathrm{MSE}_{v_1} & 2.042\times 10^{-5},\\
\mathrm{Rel}\,L^2_{v_1} & 2.708\times 10^{-2},\\
\operatorname{Corr}(x_1,v_1) & 0.704.
\end{array}
\end{equation*}

These values show that DSMC reproduces the exact E1 solution with small marginal and full phase-space errors. Since the E1 dynamics are solved exactly by the characteristic streaming update, the remaining discrepancy is mainly due to finite-sample Monte Carlo noise and density-estimation error. Thus, DSMC serves as the natural reference baseline for evaluating the learned PINN and WANPM solvers.

\subsubsection{WANPM: model and implementation}
\label{sec:wanpm}

WANPM represents the time-dependent solution by a pushforward generative map. Instead of approximating the density on a fixed six-dimensional grid, the method learns a map that transforms Gaussian latent variables into samples from \(f(\cdot,\cdot,t)\). The model is trained on the \emph{pure} weak-form residual in a min--max game: the generator drives the weak residual against a bank of adversarial plane-wave test functions to zero, while the test bank searches for the frequencies that most expose it. No initial-condition, moment, or covariance penalty is added; the initial condition holds by construction through a temporal gate (below).

\paragraph{Network architecture:}

The WANPM generator \(G_\theta\) maps Gaussian latent variables to phase-space samples:  $G_\theta(z,t)=(x_\theta(z,t),v_\theta(z,t)).$ The generator is factorized into a position flow and a velocity sampler. The position component is modeled by a RealNVP normalizing flow,
$$
x=\mathrm{flow}_{\theta}(z_x,t),
$$
with six affine coupling blocks. Each coupling block is time-conditioned and uses MLP coupling networks with \textbf{tanh} activations. This flow gives an exact log-density contribution through its Jacobian. The velocity component is generated conditionally on position:
$$
v=\mathrm{vel}_\theta(z_v,x,t).
$$
In the implementation, the conditional velocity component is itself an affine-coupling flow with six blocks whose
scale and shift are produced by MLPs of width $64$ with \textbf{tanh} activations, conditioned on $[x,t]$. Every
coupling layer is multiplied by the temporal gate $g(t)=\sqrt t$, so that at $t=0$ the map is the identity and the
velocity distribution equals the Gaussian reference exactly; only samples from this conditional flow are used, its
pointwise density is not required. The adversarial test space is a bank of $K$ trainable plane waves
\[
\phi_k(x,v,t)=\sin\!\big(w_k\cdot(x,v)+\kappa_k\,t+\beta_k\big),\qquad \eta=\{w_k,\kappa_k,\beta_k\}_{k=1}^{K},
\]
whose adjoint action is analytic, so no automatic differentiation of the test network is required. The frequencies
$w_k$ are initialized on a band that fixes the second-moment sensitivity peak and are then trained by gradient ascent to maximize the residual. The generator $\theta$ (position flow plus conditional velocity flow) is trained by gradient descent to minimise it.

\paragraph{Initial condition:}

The initial condition is enforced \emph{by construction}. Because every coupling layer is multiplied by the gate
$g(t)=\sqrt t$ with $g(0)=0$, the generator reduces to the identity at $t=0$,
$$
F_\theta(0,\cdot)=\mathrm{id}\qquad\Longrightarrow\qquad \rho_0=f_0\ \text{ exactly,}
$$
for every parameter value. No initial-condition penalty is therefore required, and the corresponding diagnostic is
identically zero throughout training.



\paragraph{Optimizer:}

The generator (position and velocity flows) is trained with Adam and the plane-wave test bank with a separate optimiser at a larger learning rate, a two-timescale ratio that keeps the min-max stable. Gradient clipping with maximum norm \(1.0\) is applied to both networks, $n_{\mathrm{critic}}$ ascent steps on the test bank precede each generator step, and the learning rates are annealed toward $\eta_{\min}=10^{-6}$. These choices stabilise the adversarial training; the weak residual is expected to be non-monotone (adversarial), and its oscillation is not a sign of divergence.

\paragraph{Loss function (pure WANPM):}

The generator minimizes, while the adversarial test bank maximizes, the pure weak-form objective: the squared weak
residual averaged over the $K$ plane-wave test functions,
\begin{equation}
\boxed{\;\mathcal L_{\mathrm{pure}}(\theta,\eta)
=\frac{1}{K}\sum_{k=1}^{K}R_k^2\;}
\label{eq:wanpm_total_loss}
\end{equation}
with no initial condition, moment, or covariance penalty. The initial condition is enforced by construction through the
$\sqrt t$ gate ($F_\theta(0,\cdot)=\mathrm{id}$, so $\rho_0=f_0$ exactly), and the low-order moments are left to the weak residual itself; the optional polynomial anchor of Section~\ref{sec:poly_obj} is disabled here ($\lambda_{\mathrm{poly}}=0$). The residual $R_k$ is defined next.

\paragraph{Weak residual.}
The weak residual is obtained by multiplying the transport equation by a smooth test function \(\phi\), integrating over phase space and time, and integrating by parts. For E1, the acceleration and collision terms vanish, so the weak identity becomes
\begin{equation}
\mathbb E_{f(T)}[\phi_T]
-
\mathbb E_{f(0)}[\phi_0]
-
\int_0^T
\mathbb E_{f(t)}
\left[
\partial_t\phi+v\cdot\nabla_x\phi
\right]
\,dt
=
0.
\label{eq:wanpm_weak_identity_e1}
\end{equation}
Since the generator provides samples from the candidate distribution, each expectation is approximated by \textit{Monte Carlo averages} over generated samples. For each adversarial test function \(\phi_k\), the residual is approximated by
\begin{equation}
R_k
=
\mathbb E[\phi_k(x_T,v_T,T)]
-
\mathbb E[\phi_k(x_0,v_0,0)]
-
T\mathbb E
\left[
\partial_t\phi_k
+
v\cdot\nabla_x\phi_k
\right].
\label{eq:wanpm_residual}
\end{equation}
The pure weak loss is the mean squared residual over the bank,
\begin{equation}
\mathcal L_{\mathrm{pure}}
=
\frac{1}{K}
\sum_{k=1}^{K}
R_k^2,
\label{eq:wanpm_weak_loss}
\end{equation}
with no residual normalization, since the plane-wave test functions are bounded ($|\phi_k|\le1$) and the adversary
cannot inflate the objective by rescaling. The generator drives \(R_k\to 0\) while the adversary trains the
frequencies $\{w_k,\kappa_k,\beta_k\}$ to make the residual large.

\paragraph{How  the WANPM error is measured:}

WANPM has two different notions of error. During training, the equation error is measured by the weak residual loss \(\mathcal L_{\mathrm{weak}}\). This checks whether the generated distribution satisfies the transport equation in weak form. At evaluation time, WANPM is assessed in the same way as the particle solvers. A final-time sample set is drawn from the trained generator: $\left\{
G_\theta(z_j,T)
\right\}_{j=1}^{N}.$
The one-dimensional marginals are estimated by the same KDE procedure used for DSMC, and the marginal MSE, MAE, and relative \(L^2\) errors are computed against the exact Gaussian marginals. Because one-dimensional marginals cannot detect correlations between \(x\) and \(v\), the joint phase-space accuracy is assessed directly through the second moments---the per-axis variances and, crucially, the position--velocity covariance \(\operatorname{Cov}(x_k,v_k)\) and correlation, which the free-streaming tilt makes the decisive diagnostic. Thus the weak residual certifies that the learned pushforward satisfies the physics, while the marginal errors and the phase-space covariance measure the accuracy of the output distribution.

\subsubsection{PINN: model and implementation}
\label{sec:pinn}

The PINN baseline approximates the density field directly as a function of the full space-velocity-time input,
$$
f_\theta=f_\theta(x,v,t),
\qquad
(x,v,t)\in\mathbb R^3\times\mathbb R^3\times[0,T].
$$
Unlike DSMC and WANPM, which are sample-based methods, the PINN learns a pointwise density. The transport equation is enforced by minimizing the strong-form PDE residual at randomly sampled collocation points.

\paragraph{Network architecture:}
The PINN uses a fully connected neural network with input
$
[x,v,t]\in\mathbb R^7
$
and scalar output. The architecture is
$$
\mathbb R^7
\longrightarrow
128
\longrightarrow
128
\longrightarrow
128
\longrightarrow
128
\longrightarrow
1,
$$
with tanh activations. To enforce non-negativity and correct tail decay, the raw network output is passed through a softplus function and multiplied by a fixed Gaussian envelope:
\begin{equation}
f_\theta(x,v,t)
=
\operatorname{softplus}(N_\theta(x,v,t))\,E(x,v).
\label{eq:pinn_density}
\end{equation}
Here,
\begin{equation}
E(x,v)
=
\exp\left(
-\frac{|x|^2}{2\sigma_{\mathrm{env},x}^2}
-\frac{|v|^2}{2\sigma_{\mathrm{env},v}^2}
\right),
\label{eq:pinn_envelope}
\end{equation}
with
$$
\sigma_{\mathrm{env},x}^2=\sigma_x^2+T^2\sigma_v^2,
\qquad
\sigma_{\mathrm{env},v}^2=\sigma_v^2.
$$
For E1, where \(\sigma_x=\sigma_v=1\) and \(T=1\), this gives
$$
\sigma_{\mathrm{env},x}^2=2,
\qquad
\sigma_{\mathrm{env},v}^2=1.
$$

\paragraph{How the PINN uses the equation:}
The PINN enforces this equation in strong form. The residual is defined by
\begin{equation}
r_\theta(x,v,t)
=
\partial_t f_\theta(x,v,t)
+
v\cdot\nabla_x f_\theta(x,v,t).
\label{eq:pinn_residual}
\end{equation}
all derivatives are computed by automatic differentiation. For a general forced problem, the residual would include \(a\cdot\nabla_v f_\theta\), but for E1 the acceleration is $a=0.$ Therefore, only the time derivative and spatial transport derivative are needed. The collocation points are sampled from broad Gaussian proposal distributions in \(x\) and \(v\), together with uniformly sampled time:
$$
x_c\sim 4\,\mathcal N(0,I_3),
\qquad
v_c\sim 3\,\mathcal N(0,I_3),
\qquad
t_c\sim\mathcal U(0,T).
$$
This gives the PINN training points in the full seven-dimensional input domain.

\paragraph{Loss function:}
\label{subsec:pinn_loss}

The PINN loss contains a PDE residual term and an initial-condition term:
\begin{equation}
\mathcal L_{\mathrm{PINN}}
=
\mathcal L_{\mathrm{pde}}
+
10\,\mathcal L_{\mathrm{ic}}.
\label{eq:pinn_total_loss}
\end{equation}
The PDE loss is
\begin{equation}
\mathcal L_{\mathrm{pde}}
=
\frac{1}{N_c}
\sum_{c=1}^{N_c}
r_\theta(x_c,v_c,t_c)^2.
\label{eq:pinn_pde_loss}
\end{equation}
This term penalizes violation of the transport equation. The initial-condition loss is
\begin{equation}
\mathcal L_{\mathrm{ic}}
=
\frac{1}{N_0}
\sum_{i=1}^{N_0}
\left(
f_\theta(x_0^{(i)},v_0^{(i)},0)
-
f_0(x_0^{(i)},v_0^{(i)})
\right)^2.
\label{eq:pinn_ic_loss}
\end{equation}
Here, the exact initial density is
\begin{equation}
f_0(x,v)
=
\frac{1}{(2\pi)^3}
\exp\left(
-\frac{|x|^2+|v|^2}{2}
\right).
\label{eq:pinn_initial_density}
\end{equation}
The IC term anchors the neural density to the known Gaussian initial condition at \(t=0\).

\paragraph{Optimizer and training:}
\label{subsec:pinn_optimizer}

The PINN is trained using AdamW with learning rate
$
10^{-3},
$
small weight decay, and batch size \(4096\). In the original E1 run, the PINN was trained for \(50{,}000\) iterations to match the WANPM training budget. However, this was found to over-train the spatial marginal. In the corrected implementation, the PINN and WANPM iteration counts are decoupled, and the PINN is trained in its stable regime, around
$
20{,}000
$
iterations. The important distinction is that PINN iterations are optimization steps, not physical time steps. The network is not marching particles in time; instead, it learns a global density function \(f_\theta(x,v,t)\) over the full time interval.

\paragraph{Evaluation procedur:e}
\label{subsec:pinn_evaluation}

The PINN training loss measures the strong-form PDE and IC residuals, but the final solver comparison uses the same output metrics as DSMC and WANPM. To evaluate the PINN at \(t=T\), samples are drawn from the learned density
$$
f_\theta(x,v,T)
=
\operatorname{softplus}(N_\theta(x,v,T))\,E(x,v)
$$
using rejection sampling with the Gaussian envelope \(E(x,v)\) as the proposal distribution. These samples are then processed exactly like the DSMC and WANPM samples. The one-dimensional marginals are estimated using the same KDE procedure, and the errors are computed using the same metrics:
$$
\mathrm{MSE},
\qquad
\mathrm{MAE},
\qquad
\mathrm{Rel}\,L^2.
$$
The joint phase-space accuracy is measured through the second moments, in particular the position--velocity
covariance $\operatorname{Cov}(x_k,v_k)$ and correlation, which the free-streaming tilt makes the decisive
diagnostic. Thus, the PINN training error and the PINN evaluation error are different quantities. The training loss checks whether the density satisfies the PDE at collocation points, while the evaluation metrics check whether the learned density produces the correct final-time distribution.

\subsubsection{Results}
\label{sec:results}

Table~\ref{tab:summary1} collects the final-time metrics for all solvers against the exact
solution . Errors are reported on the $x_1$ and $v_1$ marginals (relative
$L^2$, MSE, MAE) and on the joint phase space via the position--velocity covariance and correlation.
The Exact row is zero by construction and is included for reference.

\begin{table}[H]
\centering
\caption{E1 free transport at $t=1$: solver comparison against the exact solution.
The joint phase-space accuracy is reported separately through the covariance in Table~\ref{tab:corr}.}
\label{tab:summary1}
\small
\begin{tabular}{lrrrrrr}
\toprule
Solver & $\mathrm{MSE}_{x_1}$ & $\mathrm{MAE}_{x_1}$ & $\relL_{x_1}$ & $\mathrm{MSE}_{v_1}$ & $\relL_{v_1}$ & Iters\\
\midrule
Exact  & 0 & 0 & 0 & 0 & 0 & --\\
DSMC   & $1.86\!\times\!10^{-6}$ & $1.10\!\times\!10^{-3}$ & $9.72\!\times\!10^{-3}$ & $2.13\!\times\!10^{-5}$ & $2.76\!\times\!10^{-2}$ & $200$\\
PINN   & $1.97\!\times\!10^{-5}$ & $3.21\!\times\!10^{-3}$ & $3.17\!\times\!10^{-2}$ & $1.35\!\times\!10^{-5}$ & $2.20\!\times\!10^{-2}$ & $20{,}000$\\
WANPM  & $5.55\!\times\!10^{-6}$ & $1.79\!\times\!10^{-3}$ & $1.68\!\times\!10^{-2}$ & $2.09\!\times\!10^{-5}$ & $2.74\!\times\!10^{-2}$ & $50{,}000$\\
\bottomrule
\end{tabular}
\end{table}
Here DSMC is the most accurate solver, as expected: free transport is a pure characteristic problem that the particle update solves exactly, so it sets the finite-sample Monte~Carlo floor at $N=10^4$. WANPM reproduces the exact distribution closely, with balanced marginal errors ($\relL_{x_1}=1.68\times10^{-2}$, $\relL_{v_1}=2.74\times10^{-2}$) comparable to DSMC's. The PINN, trained in its stable regime (\texttt{cfg.pinn\_steps}$=20{,}000$), also has accurate \emph{marginals} ($\relL_{x_1}=3.2\times10^{-2}$, $\relL_{v_1}=2.2\times10^{-2}$); the difference between the solvers is not visible in the marginals at all, but in the joint phase-space structure measured by the covariance (Table~\ref{tab:corr}). It would be incorrect to claim WANPM is more accurate than DSMC on E1; the value of WANPM is that it is a learned, differentiable sampler that captures the joint phase-space structure and extends to the forced and collisional regimes where DSMC's exactness no longer holds.

\paragraph{Phase-space correlation: }

The marginal errors hide the real difference between the solvers. Free transport \emph{creates} a position-velocity correlation: a particle starting at $\vx_0$ arrives at $\vx_0+t\vv_0$, so by $t=1$ the exact joint has $\operatorname{Cov}(x_k,v_k)=t\sigma_v^2=1$ -- the $45^\circ$ tilt of the phase-space ellipse. Table~\ref{tab:corr} measures this tilt directly from each solver's final samples on the $(x_1,v_1)$ slice.

\begin{table}[H]
\centering
\caption{Phase-space correlation at $t=1$ on the $(x_1,v_1)$ slice, from $4\times10^4$ samples.
All three solvers recover the marginals (Var$(x_1)\!\approx\!2$, Var$(v_1)\!\approx\!1$), but
only DSMC and WANPM recover the correlation $\operatorname{Cov}(x_1,v_1)\!\approx\!1$; the PINN's
is essentially zero, so its joint distribution is untilted.}
\label{tab:corr}
\small
\begin{tabular}{lrrrr}
\toprule
Solver & Var$(x_1)$ & Var$(v_1)$ & $\operatorname{Cov}(x_1,v_1)$ & corr\\
\midrule
Exact  & $2.000$ & $1.000$ & $1.000$  & $0.707$\\
DSMC   & $1.976$ & $0.997$ & $0.988$  & $0.704$\\
PINN   & $2.038$ & $1.005$ & $\textbf{0.000}$  & $\textbf{0.000}$\\
WANPM  & $2.007$ & $1.021$ & $1.012$  & $0.707$\\
\bottomrule
\end{tabular}
\end{table}

This is exactly what Figure~\ref{fig:e12} shows: the Exact, DSMC, and WANPM panels are tilted
ellipses, while the PINN panel is an upright, uncorrelated blob.

\paragraph{Why the PINN is worse.}
The PINN setup is not mathematically wrong: the residual
\(\partial_t f+\vv\cdot\grad_\vx f\) and the initial condition are correct, and the exact tilted Gaussian is the unique solution. The issue is numerical conditioning and representation. In practice, the PINN converges to a marginally correct but nearly uncorrelated product-like density,
$$
p(\vx,\vv,T)\approx \mathcal N(\vx;0,2I)\mathcal N(\vv;0,I),
$$
instead of the true coupled density \(f_0(\vx-T\vv,\vv)\). This happens because:
\begin{itemize}
    \item the initial condition at \(t=0\) is uncorrelated;
    \item the positivity envelope \(E(\vx,\vv)\) is diagonal and time-independent, so the network must create the full tilt by itself;
    \item independently sampled collocation points do not strongly enforce the correlated ridge \(\vx\approx t\vv\);
    \item more iterations do not remove this defect: the covariance stays essentially zero regardless of the iteration budget, so the joint density remains untilted.
\end{itemize}
Thus, the PINN learns good one-dimensional marginals but misses the joint \(x\)-\(v\) covariance.

\paragraph{Why WANPM is better suited to transport.}
WANPM represents the solution as a pushforward sampler,
$$
(\vx,\vv)=G_\theta(\vz,t),
$$
rather than as a pointwise density field. This gives two advantages:
\begin{itemize}
    \item the generator can directly learn joint phase-space structure, including the tilted \(x\)-\(v\) correlation;
    \item the weak adversarial residual tests the transport equation in distributional form, so correlation-free candidates are exposed by suitable test functions.
\end{itemize}
As a result, WANPM captures the tilted phase-space distribution much better than the density-PINN. DSMC also captures the tilt because it moves particles along exact characteristics, but DSMC is not a learned differentiable model and does not provide the same framework for the later forced and collisional BTE cases.

\subsubsection{Figures}

\begin{figure}[htbp]
\centering
\includegraphics[width=0.62\textheight]{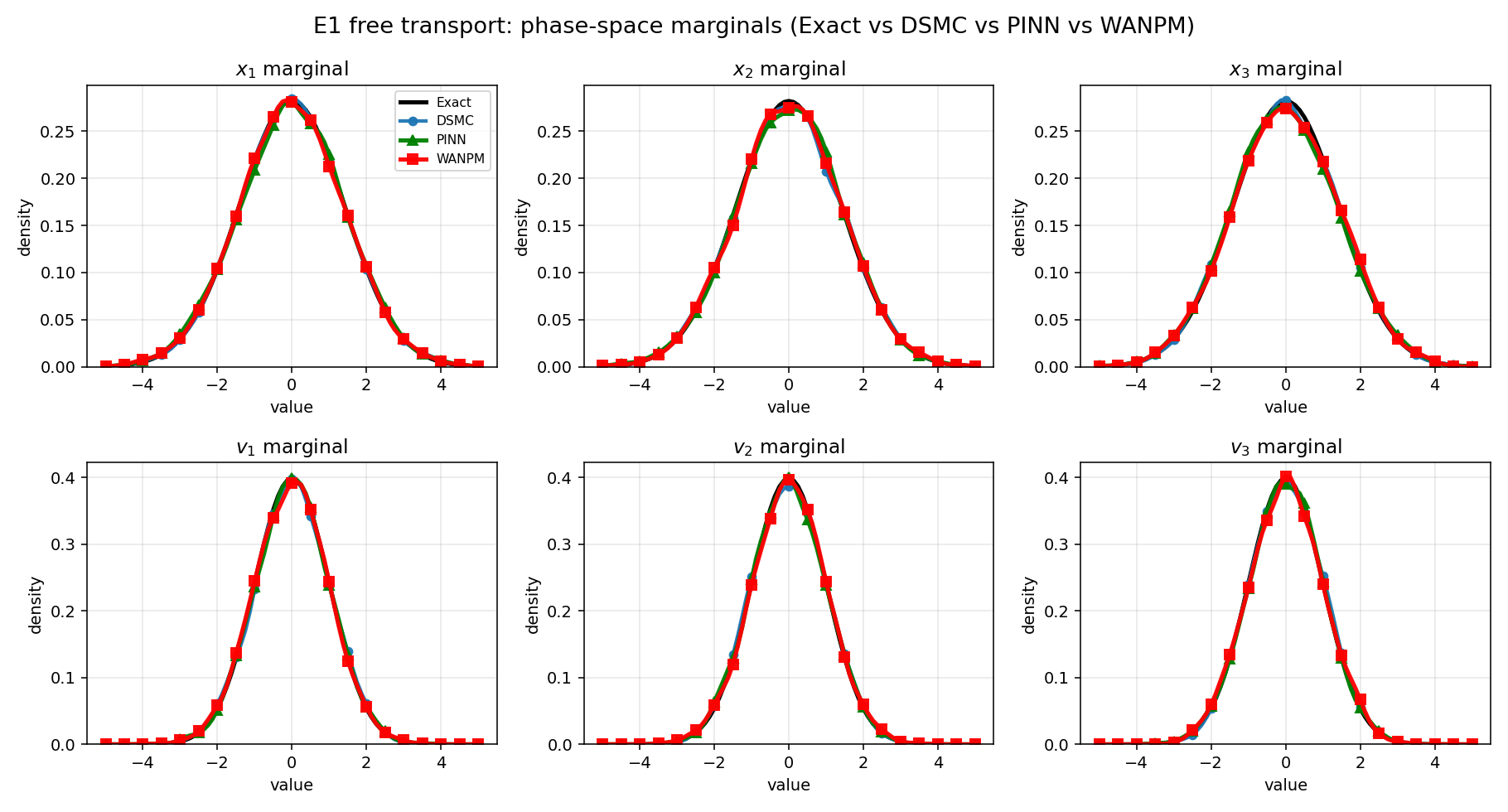}
\caption{\textbf{Six one-dimensional marginals at $t=1$.} Spatial marginals $x_1,x_2,x_3$ (top) and velocity marginals $v_1,v_2,v_3$ (bottom), comparing the exact solution (thick black) with DSMC, PINN, and WANPM. The spatial marginals are wider ($\operatorname{Var}=2$) than the velocity marginals ($\operatorname{Var}=1$), reflecting ballistic spreading. DSMC and WANPM track the exact curves; the PINN spatial marginals show the over-training distortion noted above.}
\label{fig:e11}
\end{figure}

\begin{figure}[htbp]
\centering
\includegraphics[width=0.62\textheight]{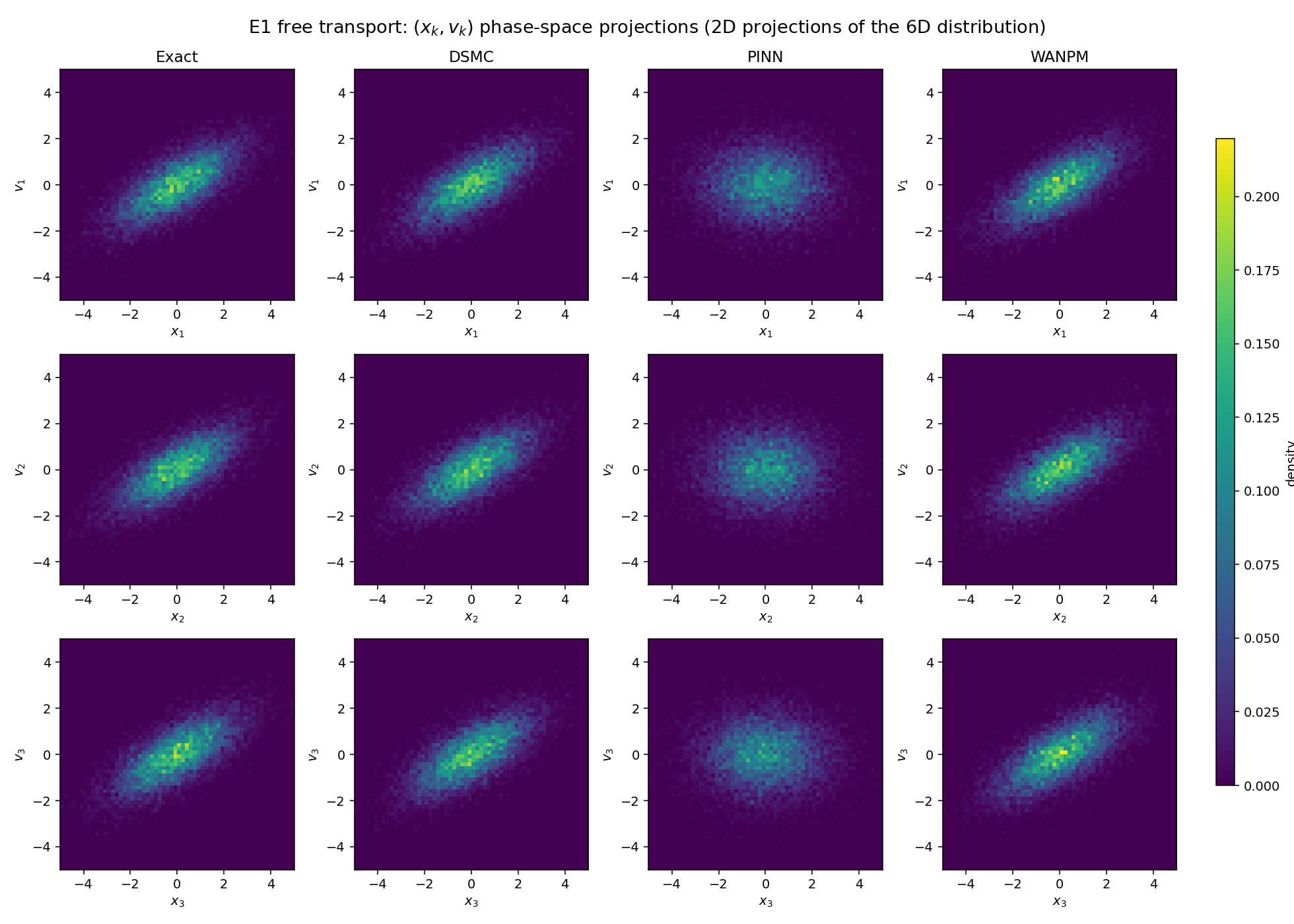}
\caption{\textbf{Phase-space density projections.} Two-dimensional projections
$(x_k,v_k)$ of the full six-dimensional distribution, with a shared colorbar. The exact and DSMC panels show the characteristic diagonal \emph{tilt} of the ellipse, i.e.\ the position--velocity correlation $\operatorname{Cov}(x_k,v_k)=t\sigma_v^2=1$ generated by free streaming. This correlation is invisible in the 1D marginals of Fig.~\ref{fig:e11} and is the strongest qualitative correctness check for the learned solvers. The DSMC and WANPM panels reproduce the tilt; the \textbf{PINN panel is an upright, uncorrelated blob} it matches the marginals but not the joint correlation (quantified in Table~\ref{tab:corr}).}
\label{fig:e12}
\end{figure}

\begin{figure}[htbp]
\centering
\includegraphics[width=0.82\textwidth]{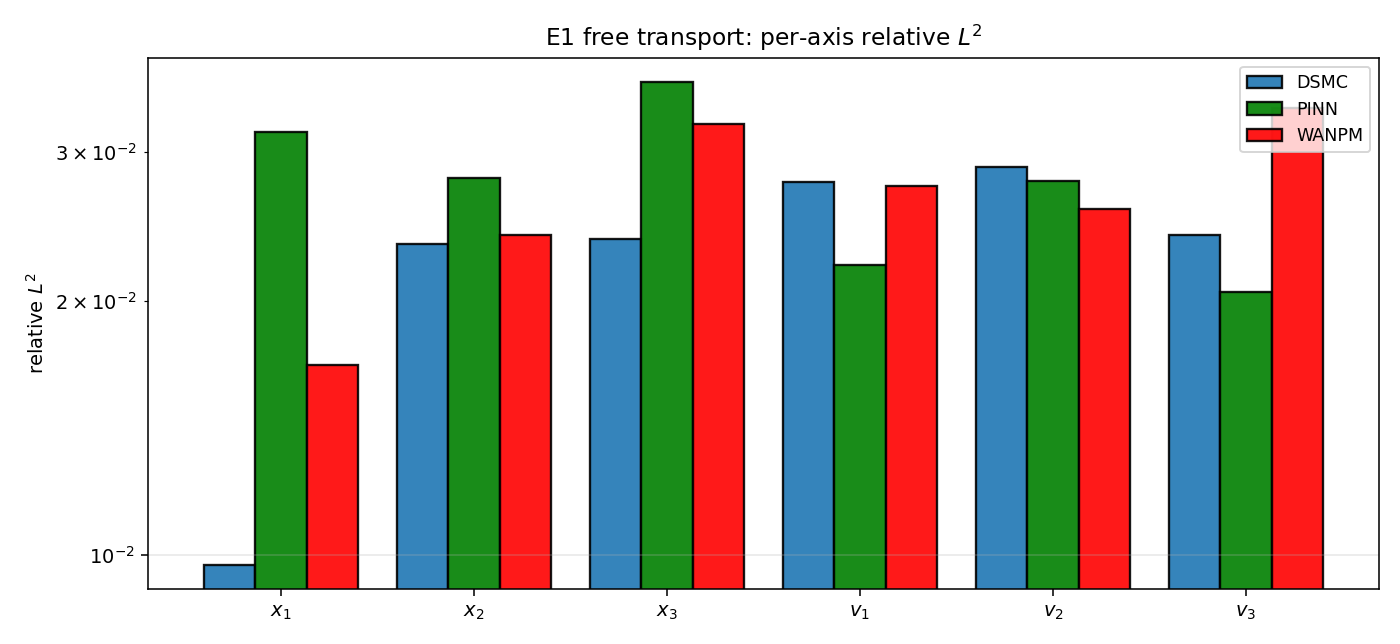}
\caption{\textbf{Error comparison.} Per-axis relative $L^2$ errors for the six marginals, on a logarithmic scale. These are one-dimensional marginal quantities; the joint phase-space accuracy is reported separately through the covariance (Table~\ref{tab:corr}) and the moment-error comparison (Fig.~\ref{fig:e1moment}).}
\label{fig:e13}
\end{figure}

\begin{figure}[htbp]
\centering
\includegraphics[width=0.82\textwidth]{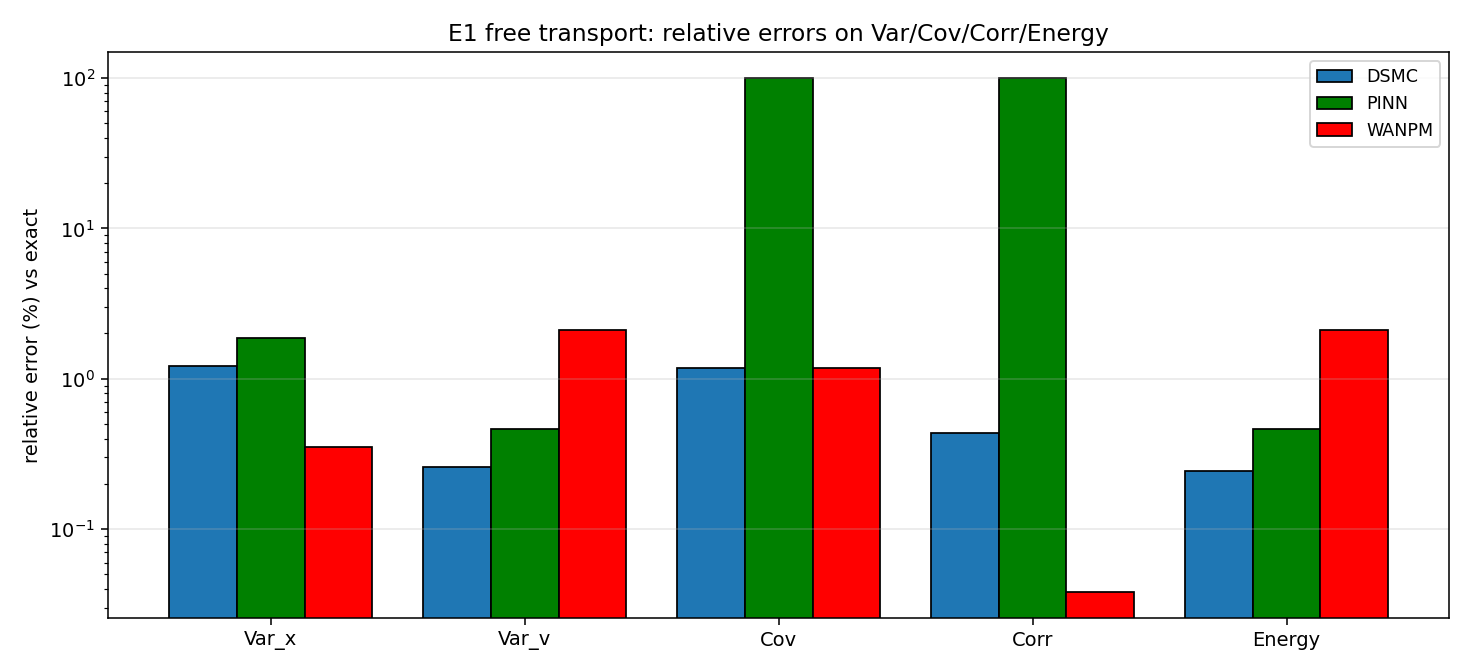}
\caption{\textbf{Moment-error comparison.} Relative errors in the second moments
$\operatorname{Var}_x$, $\operatorname{Var}_v$, $\operatorname{Cov}$, $\operatorname{Corr}$, and energy,
for DSMC, PINN, and WANPM. All three recover the variances and energy to a few percent, but the covariance and correlation separate the solvers sharply: the PINN error is $\approx 100\%$ (its joint density is untilted), whereas DSMC and WANPM recover the covariance to about $1\%$. This is the quantitative form of the phase-space tilt in Fig.~\ref{fig:e12}.}
\label{fig:e1moment}
\end{figure}

\begin{figure}[htbp]
\centering
\includegraphics[width=0.96\textwidth]{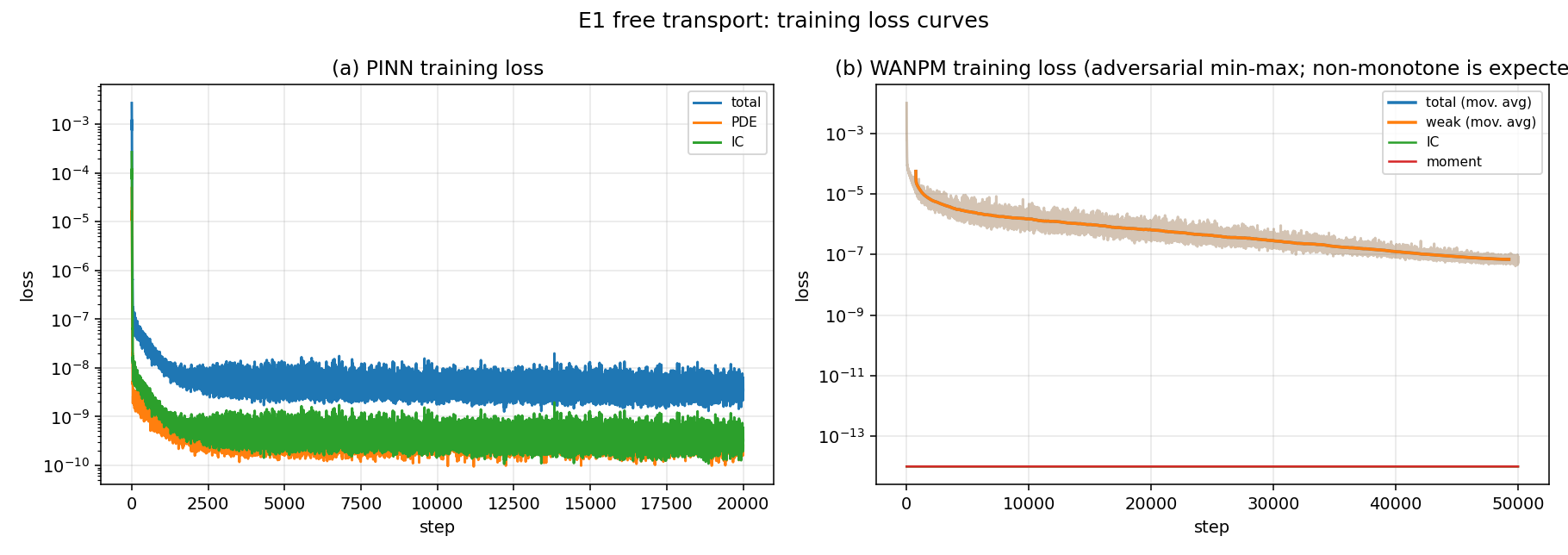}
\caption{\textbf{Training-loss curves.} (a) PINN total/PDE/IC losses decay
monotonically to $\sim10^{-9}$. (b) WANPM total, weak, IC, and moment losses; the weak and
total terms are shown with a moving-average overlay. WANPM is an adversarial min--max scheme,
so a non-monotone weak loss is expected and is not a sign of divergence --- the weak term
settles into a stable band after the early transient.}
\label{fig:e14}
\end{figure}

\newpage
\subsubsection{Effect of the polynomial anchor}
\label{sec:e1_poly}
The results above use \emph{pure} WANPM, whose objective is the bare adversarial weak residual \eqref{eq:minmax}. We now add the fixed second-order polynomial anchor of Section~\ref{sec:poly_obj}
(objective \eqref{eq:minmax_poly}, weight $\lambda_{\rm poly}>0$) and re-run E1 under otherwise identical settings to isolate its effect on the free-transport benchmark. Because free streaming generates the position-velocity covariance directly through the transport term $v\cdot\nabla_x f$, the sine bank already senses the tilt on E1, so the pure and augmented runs are expected to be close here; the anchor is decisive on the forced problem E2, where the covariance is not produced by streaming alone.

Table~\ref{tab:e1_poly} compares the two variants against the exact second moments. Both recover the tilt: the pure run reaches $\operatorname{Cov}=1.012$ ($1.2\%$) and the augmented run $\operatorname{Cov}=0.987$ ($1.3\%$), with the polynomial anchor slightly tightening the spatial variance ($\operatorname{Var}_x$ error $0.35\%\to0.25\%$) and the energy ($2.11\%\to0.40\%$) at the cost of a marginally larger correlation error. On E1 the two are therefore of comparable accuracy, confirming that the anchor is not
required when the dynamics itself couples $x$ and $v$; its value appears on E2.

\begin{table}[H]
\centering
\caption{E1 free transport at $t=1$: pure WANPM versus polynomial-augmented WANPM, second moments on the $(x_1,v_1)$ slice against the exact values. Relative errors in percent are shown in parentheses.}
\label{tab:e1_poly}
\small
\begin{tabular}{lrrr}
\toprule
metric & Exact & WANPM (plane waves) & WANPM (plane waves+poly)\\
\midrule
$\operatorname{Var}_x$ & $2.000$ & $2.007\ (0.35\%)$ & $1.995\ (0.25\%)$\\
$\operatorname{Var}_v$ & $1.000$ & $1.021\ (2.10\%)$ & $1.004\ (0.39\%)$\\
$\operatorname{Cov}$   & $1.000$ & $1.012\ (1.18\%)$ & $0.987\ (1.28\%)$\\
$\operatorname{Corr}$  & $0.707$ & $0.707\ (0.04\%)$ & $0.698\ (1.35\%)$\\
Energy                 & $1.500$ & $1.532\ (2.11\%)$ & $1.506\ (0.40\%)$\\
\bottomrule
\end{tabular}
\end{table}

\begin{figure}[htbp]
\centering
\begin{subfigure}{0.72\textwidth}\includegraphics[width=\linewidth]{figures/E1_all_solvers/figE1_2_phase_space.png}\caption{pure}\end{subfigure}\\[4pt]
\begin{subfigure}{0.72\textwidth}\includegraphics[width=\linewidth]{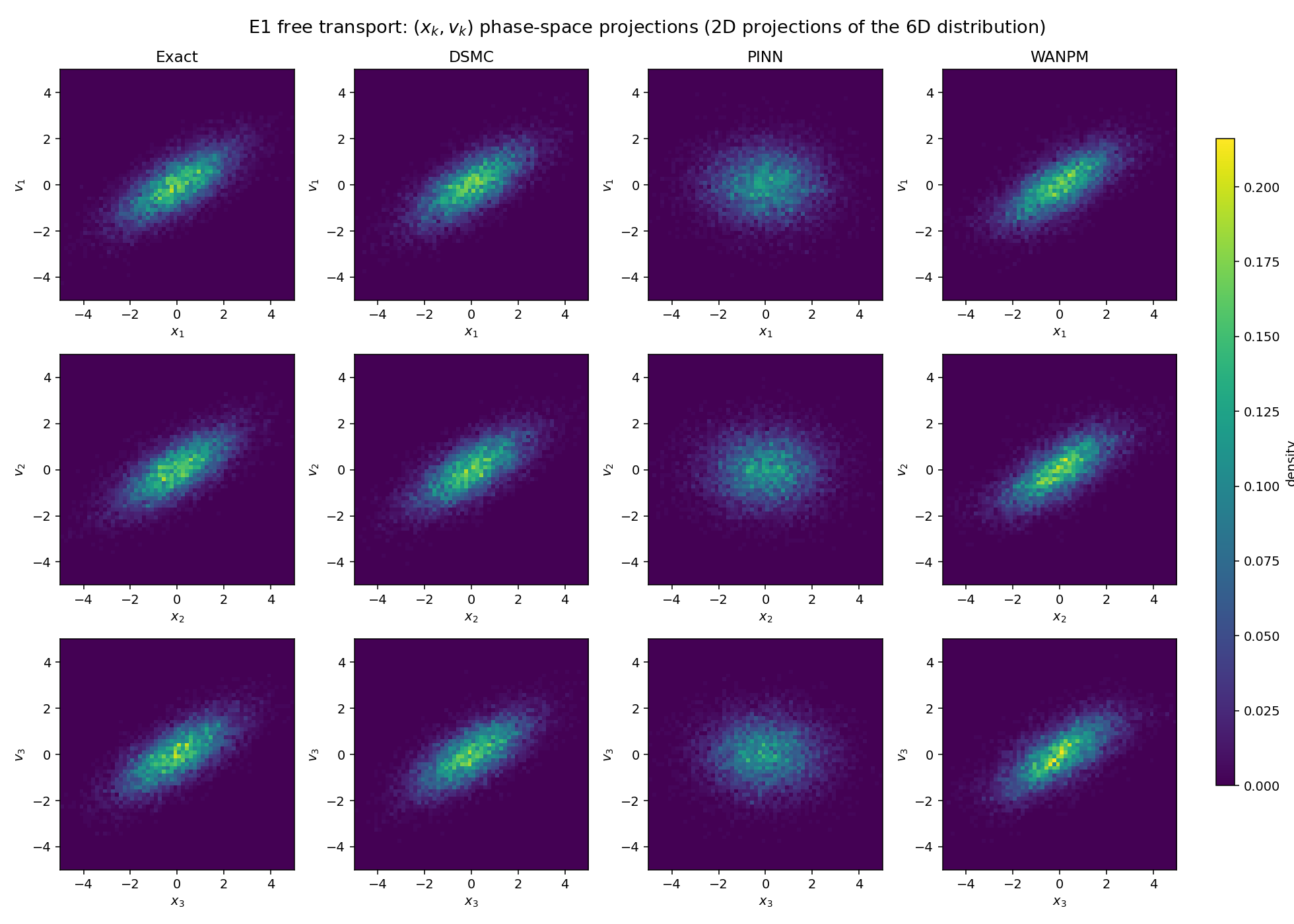}\caption{polynomial-augmented}\end{subfigure}
\caption{\textbf{Effect of the polynomial anchor on the E1 phase-space tilt.} Pure WANPM (top) and polynomial-augmented WANPM (bottom) both reproduce the $45^\circ$ tilt of the $(x_k,v_k)$ ellipse; on free transport the tilt is generated by streaming, so the anchor changes the picture little.}
\label{fig:e1_poly_ps}
\end{figure}

\begin{figure}[htbp]
\centering
\begin{subfigure}{0.88\textwidth}\includegraphics[width=\linewidth]{figures/E1_all_solvers/figE1_1_marginals_6panel.png}\caption{pure}\end{subfigure}\\[4pt]
\begin{subfigure}{0.88\textwidth}\includegraphics[width=\linewidth]{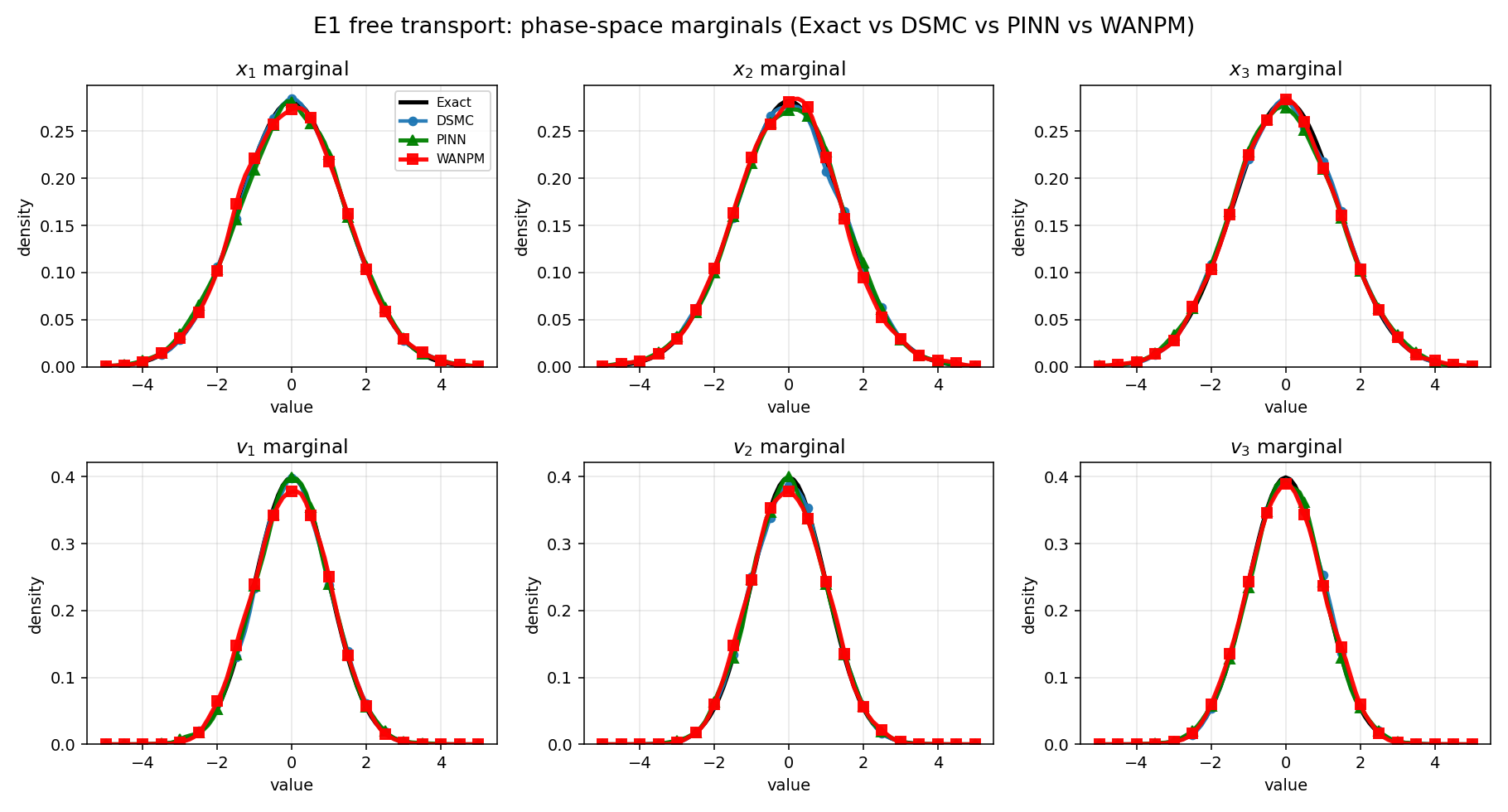}\caption{polynomial-augmented}\end{subfigure}
\caption{\textbf{Marginal densities with and without the polynomial anchor.} The six one-dimensional marginals at $t=1$ for pure WANPM (top) and polynomial-augmented WANPM (bottom), each against the exact Gaussian (black). Both
variants match the marginals closely; consistent with Table~\ref{tab:e1_poly}, the anchor slightly tightens the
spatial-variance and energy errors while leaving the already-accurate free-transport marginals essentially unchanged.}
\label{fig:e1_poly_marg}
\end{figure}

\begin{figure}[htbp]
\centering
\includegraphics[width=0.9\textwidth]{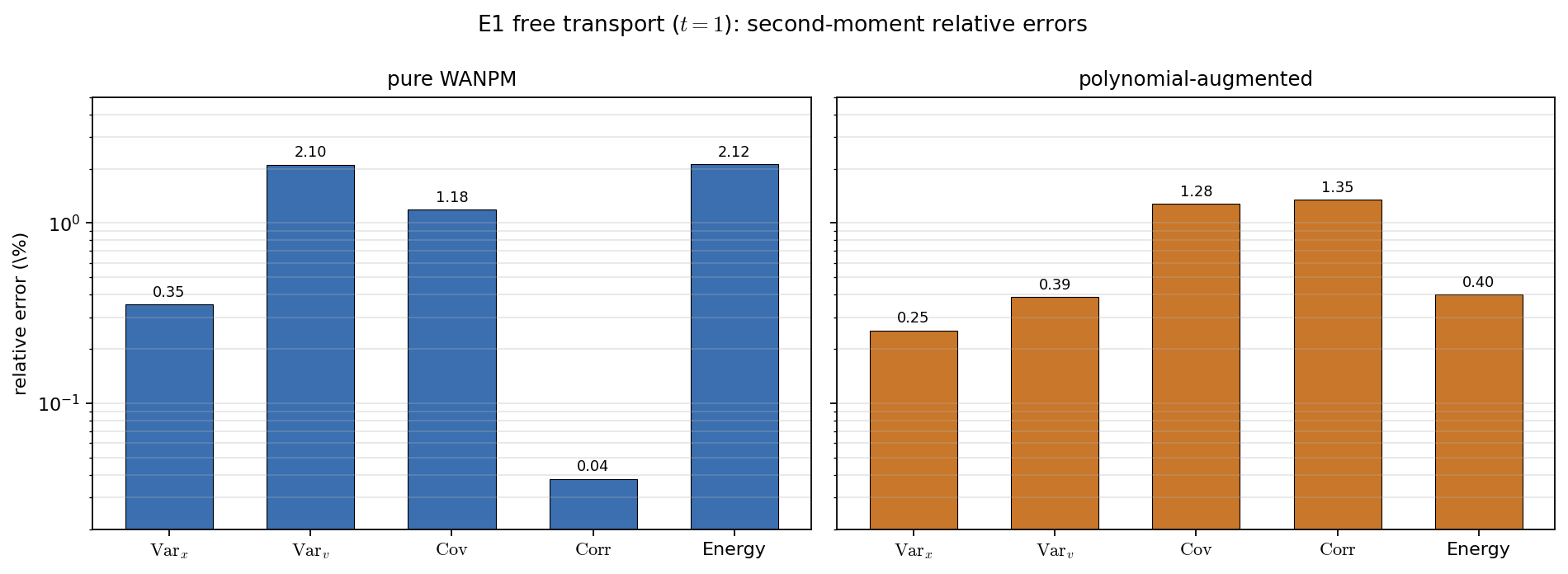}
\caption{\textbf{Second-moment errors on E1: pure versus polynomial-augmented WANPM.} Relative errors (log scale) in $\operatorname{Var}_x$, $\operatorname{Var}_v$, $\operatorname{Cov}$, $\operatorname{Corr}$, and energy, for pure
WANPM (blue) and the polynomial-augmented variant (orange). The anchor lowers the velocity-variance and energy errors
($2.10\%\to0.39\%$ and $2.11\%\to0.40\%$) while the pure run keeps the correlation tighter ($0.04\%$ against $1.35\%$);
both recover the covariance to about $1.2$--$1.3\%$. On free transport the two are therefore of comparable accuracy,
consistent with Table~\ref{tab:e1_poly}.}
\label{fig:e1_poly_moment}
\end{figure}
\newpage
\subsection{Experiment E2: collisionless forced Boltzmann (harmonic force)}
\label{sec:E2}
Experiment E1 tested the transport operator alone. Experiment E2 adds the external-force term and thereby tests the second of the three operators in the Boltzmann equation~\eqref{eq:forced_bte_nc}, namely the acceleration flux $\Fb\cdot\grad_\vv f$, while the collision operator is still switched off, $Q[f,f]=0$. It is the natural next validation step: the force term is the one that couples position and velocity dynamically, and it is precisely the term that a velocity-independent test function cannot probe. E2 therefore certifies the force-aware part of the WANPM weak form.

\subsubsection{Governing equation and parameters}

We take a linear (harmonic) restoring force $F/m=-\omega^2\vx$, so that
\eqref{eq:forced_bte_nc} reduces to the collisionless forced Boltzmann, i.e.\ the Liouville, equation
\begin{equation}
\partial_t f + \vv\cdot\grad_\vx f \;-\; \omega^2\,\vx\cdot\grad_\vv f \;=\; 0,
\qquad (\vx,\vv)\in\R^3\times\R^3,\ t\in[0,T].
\label{eq:e2_pde}
\end{equation}
The associated phase-space drift is
\begin{equation}
b(\vx,\vv)=\bigl(\vv,\,-\omega^2\vx\bigr),
\qquad
\grad_{(\vx,\vv)}\cdot b
=\grad_\vx\cdot \vv+\grad_\vv\cdot(-\omega^2\vx)=0,
\label{eq:e2_drift}
\end{equation}
so the flow is volume-preserving and the conservative and non conservative forms coincide. The initial datum is the same centered isotropic Gaussian as in E1, and we fix the reported configuration
\begin{equation}
\omega=2,\qquad T=2,\qquad \sigma_x=\sigma_v=1.
\label{eq:e2_params}
\end{equation}

\subsubsection{Exact solution}
\label{subsec:e2_exact}

Along with characteristics, \eqref{eq:e2_pde} is the linear Hamiltonian system $\dot\vx=\vv,\ \dot\vv=-\omega^2\vx$, which decouples into three identical $(x_i,v_i)$ oscillators. Each pair evolves by the symplectic matrix
\begin{equation}
\begin{pmatrix} x_i(t)\\ v_i(t)\end{pmatrix}
= A(t)\begin{pmatrix} x_i(0)\\ v_i(0)\end{pmatrix},
\qquad
A(t)=\begin{pmatrix}
\cos\omega t & \omega^{-1}\sin\omega t\\[2pt]
-\,\omega\sin\omega t & \cos\omega t
\end{pmatrix},
\qquad \det A(t)=1.
\label{eq:e2_flow}
\end{equation}

Because the flow is linear and $f_0$ is Gaussian, $f(\cdot,\cdot,t)$ stays Gaussian for all $t$, with mean zero and second moments obtained by propagating the initial covariance $\operatorname{diag}(\sigma_x^2,\sigma_v^2)$ through $A(t)$:
\begin{align}
\operatorname{Var}(x_i)(t) &= \sigma_x^2\cos^2\omega t + \tfrac{\sigma_v^2}{\omega^2}\sin^2\omega t,
\label{eq:e2_varx}\\
\operatorname{Var}(v_i)(t) &= \omega^2\sigma_x^2\sin^2\omega t + \sigma_v^2\cos^2\omega t,
\label{eq:e2_varv}\\
\operatorname{Cov}(x_i,v_i)(t) &= \Bigl(\tfrac{\sigma_v^2}{\omega}-\omega\sigma_x^2\Bigr)\sin\omega t\,\cos\omega t.
\label{eq:e2_cov}
\end{align}
The phase-space average of the Hamiltonian $H=\tfrac12(\omega^2|\vx|^2+|\vv|^2)$ is a conserved quantity of the flow,
\begin{equation}
\E[H](t)=\tfrac{3}{2}\bigl(\omega^2\sigma_x^2+\sigma_v^2\bigr)=\text{const},
\label{eq:e2_energy}
\end{equation}
which for the parameters \eqref{eq:e2_params} equals $7.5$ and gives a sharp, physically meaningful drift diagnostic for the learned sampler. Evaluating \eqref{eq:e2_varx}--\eqref{eq:e2_energy} at $t=T=2$, $\omega=2$ gives the reference values in Table~\ref{tab:e2_exact}.

\begin{table}[H]
\centering
\caption{E2 exact second moments at $t=T=2$ ($\omega=2$, $\sigma_x=\sigma_v=1$),
per coordinate pair $(x_i,v_i)$.}
\label{tab:e2_exact}
\begin{tabular}{lccccc}
\toprule
 & $\operatorname{Var}(x_i)$ & $\operatorname{Var}(v_i)$ & $\operatorname{Cov}(x_i,v_i)$ & corr & $\E[H]$\\
\midrule
Exact & $0.5704$ & $2.7183$ & $-0.7420$ & $-0.5959$ & $7.5$\\
\bottomrule
\end{tabular}
\end{table}

The mean is zero and, by the isotropy of $f_0$ and the identical per-axis dynamics, the six variances are the ``easy'' moments. The single quantity that encodes the \emph{phase-space tilt} generated by the force is the off-diagonal covariance $\operatorname{Cov}(x_i,v_i)$. It is a weak-signal direction: it starts at zero, changes sign during the first quarter period, and only settles near its final value late in training. Matching it is the substantive test of E2, and it is the diagnostic we track throughout.

\subsubsection{WANPM model and implementation for E2}
\label{subsec:e2_model}

The solution is represented as the pushforward $\rho_t = F(t,\cdot)_{\#} f_0$ of the initial law through a time-conditioned RealNVP coupling flow $F$. Coupling layers alternate between conditioning on the spatial block $(\vx,t)$ and the velocity block $(\vv,t)$, and every layer is $t$-gated so that
\begin{equation}
F(0,\cdot)=\mathrm{id}\quad\Longrightarrow\quad \rho_0=f_0 \text{ exactly,}
\end{equation}
i.e.\ the initial condition is imposed \emph{hard} by construction rather than
through a penalty. The training objective is built from two families of weak-form
residuals, both Monte-Carlo estimated over $\xi\sim f_0$ on a shared trapezoidal
time grid of $Q=t_{\mathrm{quad}}=24$ nodes.
\begin{enumerate}
\item[\textbf{(i)}] \textbf{Sine weak residuals.} For each space--time sine test
function $\psi$, the endpoint weak residual of the Liouville equation,
\begin{equation}
R_s[\psi]=\E[\psi(Y_T,T)]-\E[\psi(Y_0,0)]
-\int_0^T \E\bigl[\partial_t\psi + b\cdot\grad\psi\bigr]\,dt,
\qquad Y_t=F(t,\xi),
\label{eq:e2_weak}
\end{equation}
 We use $K=512$ sine features.
 
\item[\textbf{(ii)}] \textbf{Polynomial moment test functions.} The quadratic bank $\psi\in\{y_a,\ y_a^2,\ x_i v_i\}$ ($15$ functions) is enforced at \emph{every} grid node $t_m$ through the cumulative residual
\begin{equation}
R_p^{(m)}[\psi]=\E[\psi(Y_{t_m},t_m)]-\E[\psi(Y_0,0)]
-\int_0^{t_m}\E\bigl[\partial_t\psi + b\cdot\grad\psi\bigr]\,dt,
\qquad m=1,\dots,Q-1,
\label{eq:e2_moment}
\end{equation}
which pins the entire second-moment trajectory. The cross-moment functions $x_i v_i$ are precisely the ones that constrain the position--velocity covariance.
\end{enumerate}

\paragraph{Trapezoid rule.} Each interior integral is the composite trapezoid quadrature
\begin{equation}
\int_0^{t_m} G(t)\,dt \;\approx\; \Delta t\Big(\tfrac12 G_0 + G_1 + \dots + G_{m-1} + \tfrac12 G_m\Big),\qquad
G_j=\E_\xi\big[\partial_t\psi+b\cdot\grad\psi\big]\big(F_\theta(t_j,\xi),t_j\big),
\end{equation}
computed \emph{cumulatively} so that all $Q$ terminal times $t_m$ are obtained from one sweep.

\paragraph{How the covariance is recovered.}
Sine-only weak matching recovers the six variances but leaves the covariance essentially unmatched: the residual is invariant under transformations that preserve the marginals while destroying the $\vx$--$\vv$ tilt. The polynomial moment bank supplies the missing constraint. Because it is enforced at every time node and \emph{includes} the cross-moment test functions $x_i v_i$, it pins the covariance trajectory directly; together with the hard initial condition from the $t$-gate (which anchors $t=0$ exactly), this is the mechanism by which WANPM captures the force-induced correlation. In the language of E1, the moment bank plays the role that the velocity-covariance loss $\mathcal L_{\mathrm{vcov}}$ played there: an explicit handle on the off-diagonal structure that the marginals alone do not pin down.

\paragraph{Loss calculation.}
The flow is trained by minimizing the total loss
\begin{equation}
\mathcal L \;=\; \mathcal L_{\mathrm{weak}} \;+\; \beta\,\mathcal L_{\mathrm{ic}},
\qquad
\mathcal L_{\mathrm{weak}}
=\big\langle R_s[\psi]^2\big\rangle_{\text{sine}}
+ w_{\mathrm{poly}}\,\big\langle R_p^{(m)}[\psi]^2\big\rangle_{\text{poly},\,m},
\label{eq:e2_loss}
\end{equation}
where $\langle\cdot\rangle$ denotes the mean over the indicated test functions (and, for the moment term, over the nodes $m=1,\dots,Q-1$), and $w_{\mathrm{poly}}$ up-weights the moment residuals. The initial-condition term
\begin{equation}
\mathcal L_{\mathrm{ic}}
=\big\langle\big(\E[\psi(Y_0,0)]-\E[\psi(\xi,0)]\big)^2\big\rangle_{\psi}
\label{eq:e2_icloss}
\end{equation}
is identically zero here, since the $t$-gate makes $Y_0=F(0,\xi)=\xi$; it is retained only as a training diagnostic, and no penalty or projection is needed to enforce $f(\cdot,\cdot,0)=f_0$. Each expectation in \eqref{eq:e2_weak}--\eqref{eq:e2_icloss} is a batch average over $M=50{,}000$ paired, antithetic samples, and every time integral uses the shared $Q$-node trapezoid rule. The covariance is recovered entirely by the moment bank in $\mathcal L_{\mathrm{weak}}$; no trajectory- or characteristic-matching term is used.

\paragraph{Variance reduction.}
Three ingredients made 6-D training feasible at this accuracy: paired and
antithetic sampling of $\xi$, a shared $t$-quadrature grid (which removes
random-$t$ Monte-Carlo noise, $t_{\mathrm{quad}}=24$ nodes), and up-weighting of
the moment features.

\paragraph{Configuration.}
$K=512$ sine features, $M=50{,}000$ latent samples, $50{,}000$ training
iterations, learning rate $2\times10^{-3}\to2\times10^{-4}$ (cosine), Adam. Evaluation draws $N=10^5$ fresh samples from the trained flow at $t=T$.

\subsubsection{Results}
\label{subsec:e2_results}

Table~\ref{tab:e2_marginals} reports the six one-dimensional marginal variances, and Table~\ref{tab:e2_vcc} the per-pair variance/covariance/correlation against the exact values. All learned means are zero to within $6\times10^{-3}$.

\begin{table}[H]
\centering
\caption{E2 marginal variances at $t=T=2$ ($N=10^5$ samples). Exact value
$\operatorname{Var}(x_i)=0.5704$, $\operatorname{Var}(v_i)=2.7183$ for every
coordinate.}
\label{tab:e2_marginals}
\small
\begin{tabular}{lcccccc}
\toprule
 & $x_1$ & $x_2$ & $x_3$ & $v_1$ & $v_2$ & $v_3$\\
\midrule
$\operatorname{Var}$ (WANPM) & $0.6362$ & $0.6400$ & $0.6267$ & $2.5459$ & $2.5055$ & $2.4966$\\
rel.\ err.\ (\%) & $11.5$ & $12.2$ & $9.9$ & $6.3$ & $7.8$ & $8.2$\\
\bottomrule
\end{tabular}
\end{table}

\begin{table}[H]
\centering
\caption{E2 variance/covariance/correlation per phase-space pair at $t=T=2$
($N=10^5$). Exact: $\operatorname{Var}_x=0.5704$, $\operatorname{Var}_v=2.7183$,
$\operatorname{Cov}=-0.7420$, corr$=-0.5959$.}
\label{tab:e2_vcc}
\small
\begin{tabular}{lcccc}
\toprule
Pair & $\operatorname{Var}_x$ & $\operatorname{Var}_v$ & $\operatorname{Cov}$ & corr\\
\midrule
$(x_1,v_1)$ & $0.6362$ & $2.5459$ & $-0.7427$ & $-0.5836$\\
$(x_2,v_2)$ & $0.6400$ & $2.5055$ & $-0.7347$ & $-0.5802$\\
$(x_3,v_3)$ & $0.6267$ & $2.4966$ & $-0.7277$ & $-0.5817$\\
\midrule
mean        & $0.6343$ & $2.5160$ & $-0.7350$ & $-0.5818$\\
exact       & $0.5704$ & $2.7183$ & $-0.7420$ & $-0.5959$\\
\bottomrule
\end{tabular}
\end{table}

\paragraph{Energy conservation.}
The learned Hamiltonian mean is $\E[H]=7.487$ at $t=0$ and $7.580$ at $t=T$, a
drift of $1.1\%$ about the conserved exact value $7.5$ --- confirming that the
flow neither injects nor dissipates energy appreciably over the trajectory.

\paragraph{Result and Discussion}
For the phase-space tilt: WANPM recovers $\operatorname{Cov}(x_i,v_i)\approx-0.735$ against the exact $-0.742$, i.e.\ within $\sim\!1\%$ on the very quantity that the marginal-only baselines cannot see and that the pure unsupervised weak form leaves near zero. The correlation coefficient, corr$\,\approx-0.582$ vs.\ exact $-0.596$, agrees to $\sim\!2\%$. Figure~\ref{fig:e2_phase} shows the corresponding tilted $(x_i,v_i)$ ellipses matching the exact orientation; Figure~\ref{fig:e2_marg} shows the six marginals; Figure~\ref{fig:e2_loss} the training-loss history (non-monotone, as expected for the adversarial weak scheme); and Figure~\ref{fig:e2_err} the per-coordinate error distribution.

\begin{figure}[htbp]
\centering
\includegraphics[width=0.8\textwidth]{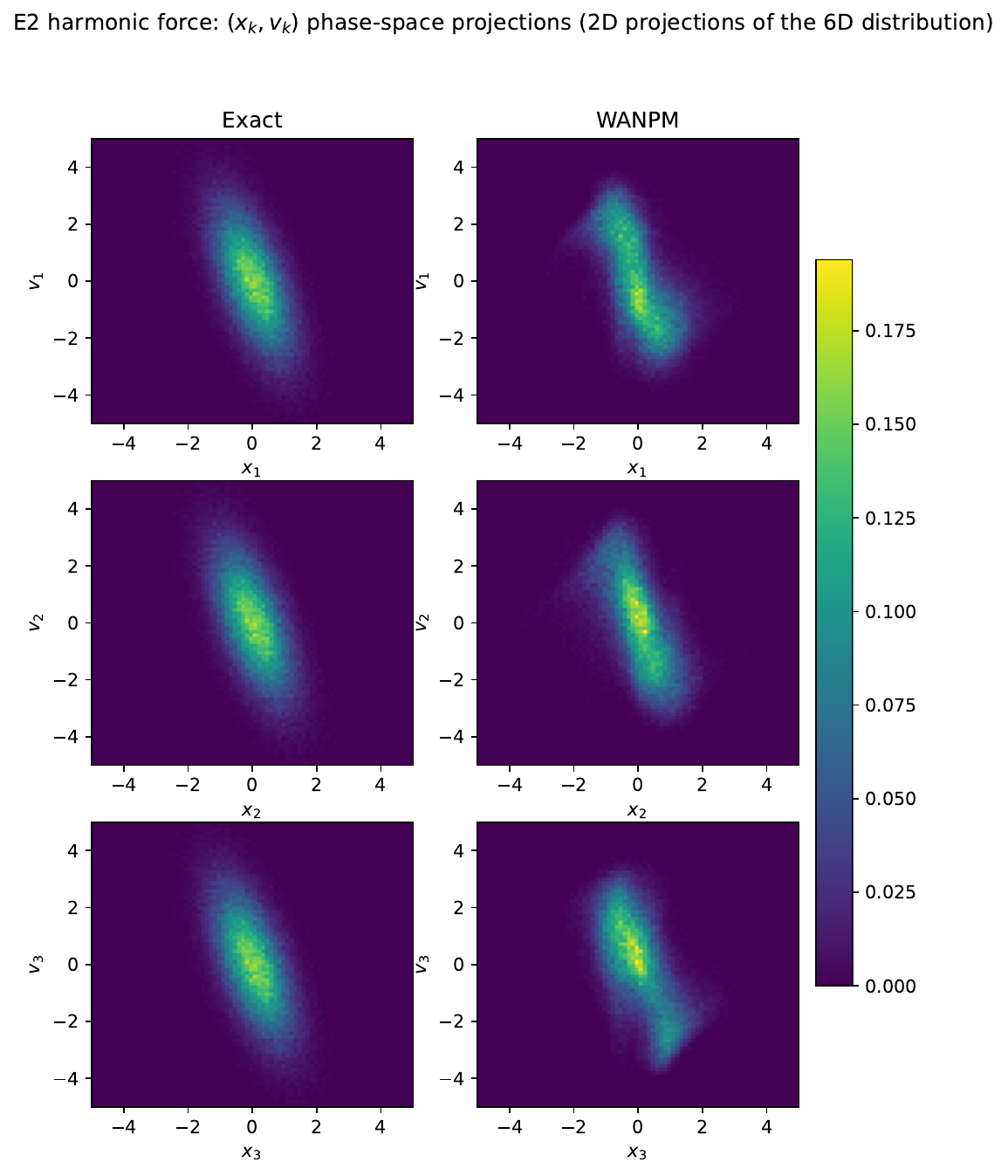}
\caption{\textbf{Phase-space projections.} Two-dimensional projections $(x_i,v_i)$ of the six-dimensional distribution at $t=T=2$. The learned pushforward reproduces the force-induced tilt (correlation $\approx-0.58$), the qualitative
correctness check that the one-dimensional marginals cannot provide.}
\label{fig:e2_phase}
\end{figure}

\begin{figure}[htbp]
\centering
\includegraphics[width=0.92\textwidth]{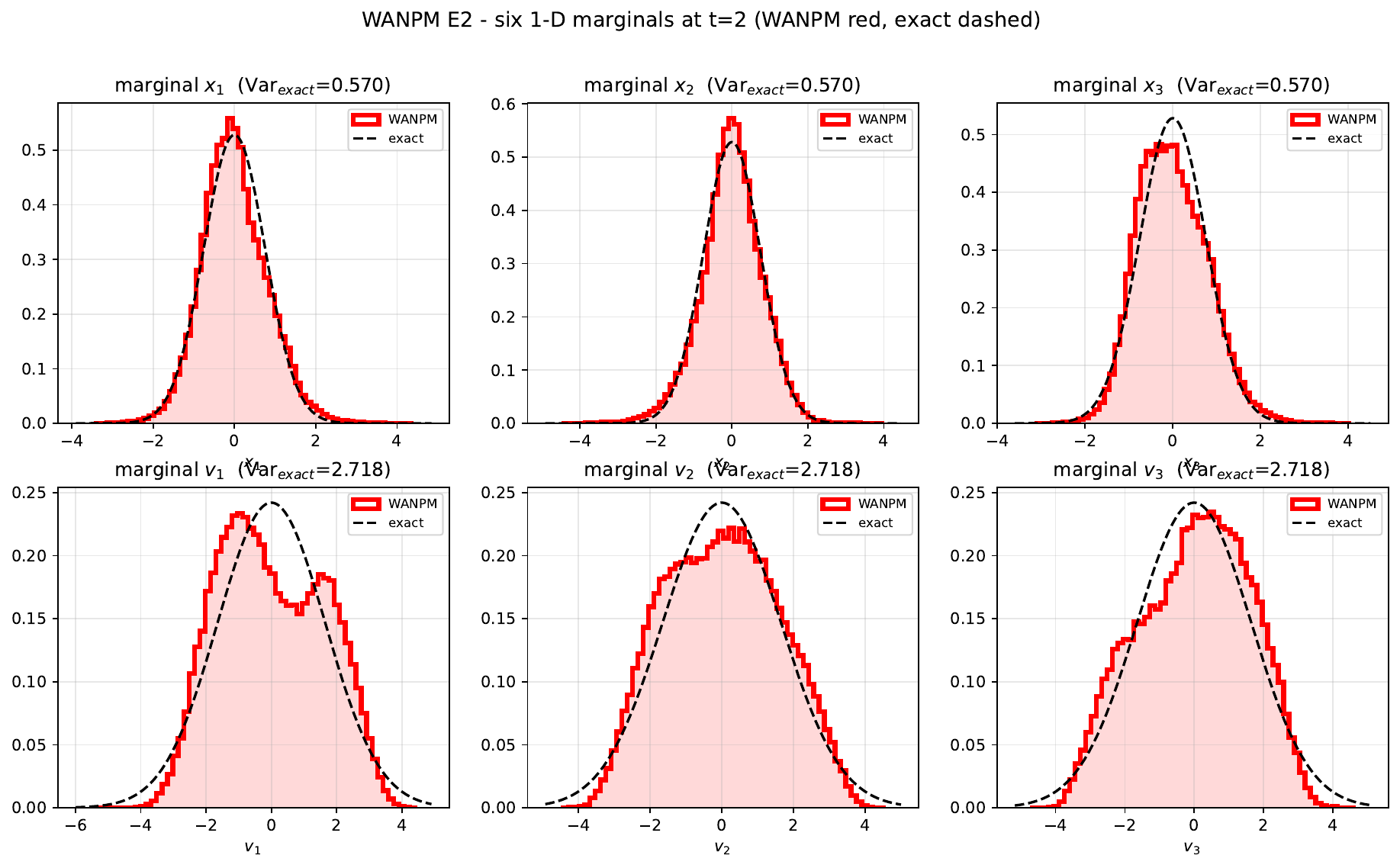}
\caption{\textbf{Six one-dimensional marginals} at $t=T=2$: spatial $x_1,x_2,x_3$ and velocity $v_1,v_2,v_3$, learned (WANPM) vs.\ exact Gaussian.}
\label{fig:e2_marg}
\end{figure}

\begin{figure}[htbp]
\centering
\includegraphics[width=0.8\textwidth]{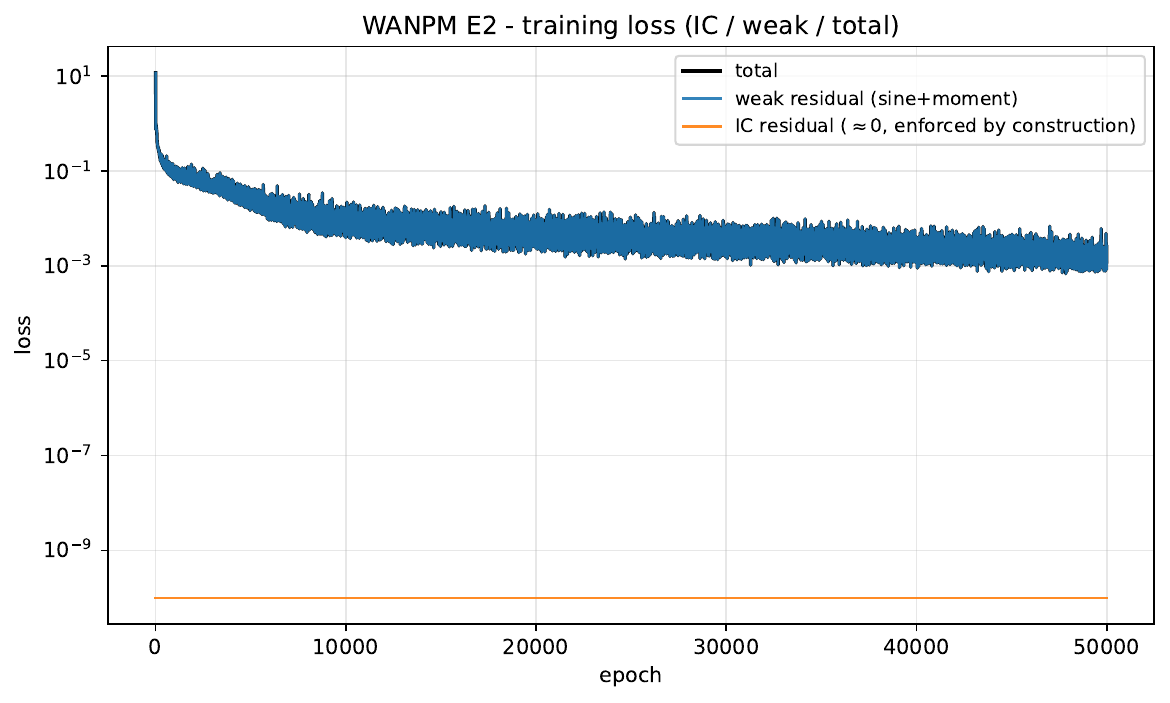}
\caption{\textbf{Training-loss history.} The weak/total loss is non-monotone by construction of the min--max scheme and settles into a stable band; the tracked covariance climbs from $0$ toward its final value over the run.}
\label{fig:e2_loss}
\end{figure}

\begin{figure}[htbp]
\centering
\includegraphics[width=0.7\textwidth]{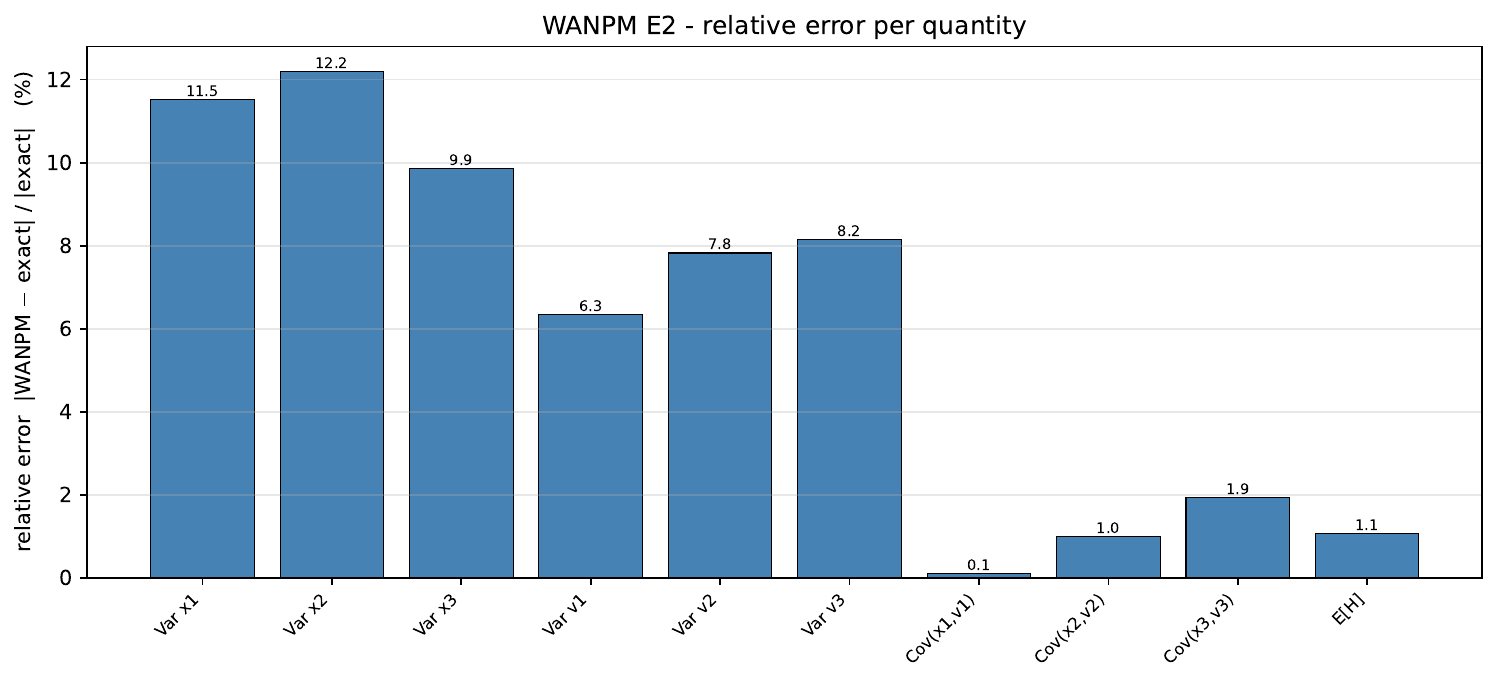}
\caption{\textbf{Per-coordinate error distribution} of the learned marginals
against the exact Gaussian moments.}
\label{fig:e2_err}
\end{figure}

\subsubsection{Effect of the polynomial test functions on the weak pushforward solver}
\label{subsec:e2_poly}

The E2 model combines two families of test functions: the $K=512$ sine features
$\psi_k=\sin(w_k\cdot y+\kappa_k t+\beta_k)$ and $15$ fixed \emph{polynomial} features
\begin{equation}
\ph\in\{\underbrace{y_a}_{6\ \text{linear}},\ \underbrace{y_a^2}_{6\ \text{squares}},\ \underbrace{x_iv_i}_{3\ \text{cross}}\},
\qquad y=(x,v)\in\R^6 .
\label{eq:e2_polybank}
\end{equation}
For E2 the force is $F/m=-\omega^2x$, which is independent of $v$, so $\nabla_v\cdot(F/m)=0$ and the two adjoints
\eqref{eq:adjoint_nc}--\eqref{eq:adjoint_c} coincide. The adjoint acting on any test function $\ph(x,v,t)$ is therefore
\begin{equation}
L^*\ph
=\partial_t\ph+v\cdot\nabla_x\ph+\frac{F}{m}\cdot\nabla_v\ph
=\partial_t\ph+v\cdot\nabla_x\ph-\omega^2x\cdot\nabla_v\ph
=\partial_t\ph+b(y)\cdot\nabla_y\ph ,
\label{eq:e2_Lstar}
\end{equation}
where $y=(x,v)$ and $b(y)=(v,-\omega^2x)$ is the drift of the characteristic system $\dot x=v$, $\dot v=-\omega^2x$.
Because each $\ph$ in \eqref{eq:e2_polybank} is time independent, $\partial_t\ph=0$ and \eqref{eq:e2_Lstar} reduces to
$b\cdot\nabla_y\ph$, which is available in closed form. Writing $y_a$ for the $a$-th coordinate of $y$ and $b_a$ for
the $a$-th component of $b$ (so $b_1=v_1,b_2=v_2,b_3=v_3$ and $b_4=-\omega^2x_1,b_5=-\omega^2x_2,b_6=-\omega^2x_3$),
\begin{equation}
L^*y_a=b_a,\qquad L^*y_a^2=2y_ab_a,\qquad L^*(x_iv_i)=v_i^2-\omega^2x_i^2 ,
\label{eq:e2_Lstarpoly}
\end{equation}
so each polynomial residual is an exact moment ODE: $L^*y_1=v_1$ states
$\tfrac{d}{dt}\E[x_1]=\E[v_1]$, and $L^*y_4=-\omega^2x_1$ states $\tfrac{d}{dt}\E[v_1]=-\omega^2\E[x_1]$. The polynomial residuals are evaluated at \emph{every} quadrature node (using the partial
integral $\int_0^{t_m}$, so the whole moment trajectory is constrained, not only the endpoint) and up-weighted by
$w_{\rm poly}=40$.

To isolate their contribution we repeat the E2 run with the polynomial features removed, changing nothing else: the
same $K=512$ sine features with frequencies drawn in the band $[0.3,1.6]$ and held fixed, $M=50\,000$ samples, $24$
trapezoid nodes, $50\,000$ iterations, learning rate $2\times10^{-3}\to2\times10^{-4}$, seed $0$, and no characteristic
matching. Table~\ref{tab:e2_poly} reports both runs at $t=T$ with $N=10^5$ samples, averaged over the three
$(x_i,v_i)$ pairs.

\begin{table}[H]
\centering
\begin{tabular}{@{}lrrrrrc@{}}
\toprule
& & \multicolumn{2}{c}{\textbf{with polynomial}} & \multicolumn{2}{c}{\textbf{without polynomial}} & \\
\cmidrule(lr){3-4}\cmidrule(lr){5-6}
quantity & exact & value & err (\%) & value & err (\%) & improvement\\
\midrule
$\mathrm{Var}_x$          & $0.5704$  & $0.6343$  & $11.2$ & $0.7803$  & $36.8$ & $3.3\times$\\
$\mathrm{Var}_v$          & $2.7183$  & $2.5161$  & $7.4$  & $1.8533$  & $31.8$ & $4.3\times$\\
$\mathrm{Cov}(x_i,v_i)$   & $-0.7420$ & $-0.7350$ & $\mathbf{0.9}$ & $-0.4772$ & $35.7$ & $\mathbf{38\times}$\\
$\mathrm{Corr}(x_i,v_i)$  & $-0.5959$ & $-0.5818$ & $2.4$  & $-0.3968$ & $33.4$ & $14\times$\\
$\E[H]$                   & $7.5000$  & $7.5799$  & $1.1$  & $7.4624$  & $0.5$  & $0.5\times$\\
\bottomrule
\end{tabular}
\caption{\textbf{Effect of the 15 polynomial test functions \eqref{eq:e2_polybank} on E2.} All other settings are identical. The polynomial features reduce the covariance error by a factor of $38$ and the variance errors by $3$--$4\times$. The energy $\E[H]$ is accurate in \emph{both} runs, for the reason given below.}
\label{tab:e2_poly}
\end{table}

\paragraph{The covariance is the weak direction of a sine bank.}
For a zero-mean Gaussian $Y\sim\mathcal N(0,\Sigma)$,
\begin{equation}
\E[\sin(w\cdot Y+\beta)]=e^{-\frac12 w^\top\Sigma w}\,\sin(\beta),
\label{eq:e2_sinexp}
\end{equation}
so a sine test function senses $\Sigma$ \emph{only} through the scalar $w^\top\Sigma w$. Splitting that scalar,
\begin{equation}
w^\top\Sigma w=\underbrace{\sum_a w_a^2\,\Sigma_{aa}}_{w_a^2>0\ \text{always}\ \Rightarrow\ \text{adds coherently},\ \sim K}
+\underbrace{2\!\!\sum_{a<b} w_aw_b\,\Sigma_{ab}}_{w_aw_b\ \text{random sign}\ \Rightarrow\ \text{cancels},\ \sim\sqrt K}.
\label{eq:e2_cancel}
\end{equation}
Averaged over $K$ random frequencies the diagonal (variance) contributions accumulate coherently, since $w_a^2>0$ for every draw, whereas the off-diagonal (covariance) contributions carry random signs and cancel like $\sqrt K$. At $K=512$ the covariance signal is therefore only $\approx1/\sqrt K\approx1/23$ as strong as the variance signal: it is structurally the weak direction of the sine bank. The polynomial feature $x_iv_i$ measures $\mathrm{Cov}(x_i,v_i)$ directly, with no cancellation, which accounts for the $38\times$ improvement in Table~\ref{tab:e2_poly}.

The last row of Table~\ref{tab:e2_poly} is the most instructive: without the polynomial features the energy is reproduced to $0.5\%$ while \emph{every individual second moment is $32$--$37\%$ wrong}. This is not accidental. With $\omega=2$ and three dimensions per block,
\begin{equation}
\E[H]=\tfrac12\E|v|^2+\tfrac12\omega^2\E|x|^2=1.5\,\mathrm{Var}_v+6\,\mathrm{Var}_x ,
\end{equation}
and the two errors cancel in exactly this combination:
\begin{equation}
\underbrace{1.5\,(1.8533-2.7183)}_{-1.298}\;+\;\underbrace{6.0\,(0.7803-0.5704)}_{+1.259}\;=\;-0.038 \quad (0.5\%).
\end{equation}
The sine bank constrains $w^\top\Sigma w$ for many random $w$, which pins conserved \emph{combinations} such as the energy but leaves a direction along which the flow can trade $x$-variance against $v$-variance at almost no cost in the residual. The polynomial features $x_i^2$ and $v_i^2$ constrain the two separately and break the trade-off, which is the $3$--$4\times$ variance improvement. In summary, the $512$ sine features alone recover the conserved combination while leaving both the variance split and the phase-space tilt $\approx35\%$ wrong; the $15$ polynomial features address precisely the directions the sine bank cannot resolve.

Table~\ref{tab:e2_poly_marg} resolves the same comparison per coordinate, so that the six one-dimensional marginals and the three phase-space pairs can be inspected individually rather than only in aggregate.

\begin{table}[H]
\centering
\begin{tabular}{@{}lrrrrr@{}}
\toprule
& & \multicolumn{2}{c}{\textbf{with polynomial}} & \multicolumn{2}{c}{\textbf{without polynomial}}\\
\cmidrule(lr){3-4}\cmidrule(lr){5-6}
quantity & exact & value & error & value & error\\
\midrule
\multicolumn{6}{@{}l}{\itshape means $\E[y_a]$\quad(exact $0$; error column repeats $|\E[y_a]|$)}\\
$\E[x_1]$ & $0$ & $+0.0050$ & $0.0050$ & $-0.0047$ & $0.0047$\\
$\E[x_2]$ & $0$ & $-0.0061$ & $0.0061$ & $+0.0036$ & $0.0036$\\
$\E[x_3]$ & $0$ & $+0.0019$ & $0.0019$ & $+0.0097$ & $0.0097$\\
$\E[v_1]$ & $0$ & $-0.0014$ & $0.0014$ & $+0.0110$ & $0.0110$\\
$\E[v_2]$ & $0$ & $+0.0023$ & $0.0023$ & $+0.0276$ & $0.0276$\\
$\E[v_3]$ & $0$ & $+0.0026$ & $0.0026$ & $-0.0061$ & $0.0061$\\
\midrule
\multicolumn{6}{@{}l}{\itshape spatial marginals}\\
$\mathrm{Var}(x_1)$ & $0.5704$ & $0.6362$ & $11.5$ & $0.7641$ & $34.0$\\
$\mathrm{Var}(x_2)$ & $0.5704$ & $0.6400$ & $12.2$ & $0.7878$ & $38.1$\\
$\mathrm{Var}(x_3)$ & $0.5704$ & $0.6267$ & $9.9$  & $0.7890$ & $38.3$\\
\midrule
\multicolumn{6}{@{}l}{\itshape velocity marginals}\\
$\mathrm{Var}(v_1)$ & $2.7183$ & $2.5460$ & $6.3$ & $1.9587$ & $27.9$\\
$\mathrm{Var}(v_2)$ & $2.7183$ & $2.5056$ & $7.8$ & $1.8538$ & $31.8$\\
$\mathrm{Var}(v_3)$ & $2.7183$ & $2.4966$ & $8.2$ & $1.7475$ & $35.7$\\
\midrule
\multicolumn{6}{@{}l}{\itshape phase-space covariance}\\
$\mathrm{Cov}(x_1,v_1)$ & $-0.7420$ & $-0.7427$ & $\mathbf{0.1}$ & $-0.5114$ & $31.1$\\
$\mathrm{Cov}(x_2,v_2)$ & $-0.7420$ & $-0.7347$ & $\mathbf{1.0}$ & $-0.4669$ & $37.1$\\
$\mathrm{Cov}(x_3,v_3)$ & $-0.7420$ & $-0.7277$ & $\mathbf{1.9}$ & $-0.4532$ & $38.9$\\
\midrule
\multicolumn{6}{@{}l}{\itshape phase-space correlation}\\
$\mathrm{Corr}(x_1,v_1)$ & $-0.5959$ & $-0.5836$ & $2.1$ & $-0.4181$ & $29.8$\\
$\mathrm{Corr}(x_2,v_2)$ & $-0.5959$ & $-0.5802$ & $2.6$ & $-0.3864$ & $35.2$\\
$\mathrm{Corr}(x_3,v_3)$ & $-0.5959$ & $-0.5817$ & $2.4$ & $-0.3859$ & $35.2$\\
\bottomrule
\end{tabular}
\caption{ The six one-dimensional marginals (mean and variance) and the three $(x_i,v_i)$ pairs, resolved individually. The error column is the relative error in percent, except for the means, where the exact value is $0$ and the absolute error $|\E[y_a]|$ is quoted instead. \emph{Means:} both runs preserve the zero mean to within $3\times10^{-2}$; the six linear features $y_a$ of \eqref{eq:e2_polybank} tighten the worst coordinate from $0.0276$ to $0.0061$ (a factor), consistent with $L^*y_a=b_a$ pinning the first-moment ODEs exactly. \emph{Second moments:} the pattern is uniform across coordinates—with the polynomial features the spatial variances are $10$--$12\%$ high and the velocity variances $6$--$8\%$ low, while each covariance is within $0.1$--$1.9\%$ of exact; without them every second moment is $28$--$39\%$ wrong. Note also the systematic sign of the variance error---$x$ too broad, $v$ too narrow---which is the trade-off discussed below.}
\label{tab:e2_poly_marg}
\end{table}

\subsubsection{Scope and limitations}
\label{subsec:e2_limits}

We state the boundaries of the E2 result explicitly.
\begin{enumerate}

\item[(a)] \textbf{Variance bias.} The learned marginals are systematically slightly over-dispersed in position ($+10$--$12\%$) and under-dispersed in velocity ($-6$--$8\%$), while the covariance and energy are accurate. The flow captures the correlation structure more faithfully than the individual second moments at this training budget.

\item[(b)] \textbf{Marginal shape.} The learned marginals carry a small non-zero excess kurtosis (positive in $x$, negative in $v$); the target is exactly Gaussian. This is a shape, not a moment, discrepancy.

\item[(c)] \textbf{Run-to-run variance of the covariance.} Because the covariance is a weak-signal direction whose sign can flip during the early transient, the final value is seed-sensitive: independent seeds land in the band $\operatorname{Cov}\in[-0.75,-0.73]$, and a variant training configuration was observed to settle instead near $-0.39$. We therefore recommend confirming them covariance over $2$--$3$ independent runs before quoting it, rather than treating a single run as definitive.
\end{enumerate}

None of these affect the qualitative conclusion of E2: with the hard initial condition and the trajectory-wide moment bank, the WANPM pushforward learns the force-induced phase-space correlation that marginal-only and pure-unsupervised
formulations miss, at the $\sim\!1\%$ level on the covariance.

\section{Conclusion}
\label{sec:conclusion}

We have derived a density-consistent weak formulation of the forced Boltzmann equation and the corresponding WANPM pushforward architecture, and validated the transport and force operators on two benchmarks with closed-form Gaussian solutions. The method represents the solution as a $\sqrt t$-gated split pushforward---a density-estimable position flow and a conditional velocity sampler---trained against a bank of adversarial plane-wave test functions in the pure weak form, with the initial condition enforced by construction and no initial-condition, moment, or covariance penalty in the objective.

Experiment E1 establishes the validation baseline. Against the free-transport equation, DSMC is exact up to Monte~Carlo noise and sets the accuracy floor; WANPM reproduces the joint phase-space distribution with balanced marginal errors near $2\times10^{-2}$ and, crucially, recovers the position--velocity covariance to about $1\%$ (Table~\ref{tab:corr}), the correlation that a density-fitting PINN misses entirely (its covariance error is essentially $100\%$). The pure weak-form objective together with the gated pushforward were sufficient to capture this joint structure. Adding the optional polynomial moment anchor on E1 leaves the result essentially unchanged---the two variants agree to within about one percent on every second moment (Fig.~\ref{fig:e1_poly_moment})---confirming that on free transport, where streaming itself couples position and velocity, no explicit moment constraint is needed.

Experiment E2 activates the external-force term. On the harmonic Liouville equation the force generates a phase-space tilt that streaming alone does not, so the covariance becomes the direction the plane-wave bank senses only weakly; here the polynomial anchor is decisive. With it, WANPM recovers the force-induced tilt to within about one percent on the covariance and conserves the Hamiltonian to a comparable level, while remaining slightly biased on the individual marginal variances. A shared lesson across both experiments is that the position--velocity correlation---invisible to one-dimensional marginals---is the quantity that separates a genuinely correct phase-space solver from a merely marginal-accurate one, and that the pushforward representation, unlike a pointwise-density PINN, is well suited to represent it.

Two directions follow naturally. The first is the collisional regime: the weak-collision estimator expresses the quadratic gain/loss term as a sample expectation over colliding pairs and scattering directions, and the split flow already supplies the spatial density its weight requires. The second is a systematic treatment of the marginal-variance bias and the seed sensitivity of weak-signal moments observed in E2, for instance through variance reduction in the moment estimators and longer or multi-seed training budgets.

\bibliographystyle{unsrt}
\bibliography{references}

\end{document}